\documentclass{amsart}
\usepackage[foot]{amsaddr}
\usepackage{ulem}

\RequirePackage[colorlinks=false, urlcolor=cyan, citecolor=magenta, breaklinks=true]{hyperref}

\usepackage{amsmath}
\usepackage{amsfonts}
\usepackage{amssymb}
\usepackage{amsthm}
\usepackage{graphicx}
\usepackage{xcolor}
\usepackage{cleveref}
\usepackage{enumitem}

\usepackage{bm}
\usepackage{mathtools}

\usepackage{leftidx}
\usepackage{isomath}
\usepackage[usenames,dvipsnames]{pstricks}
\usepackage{epsfig}
\usepackage{pst-grad} 
\usepackage{pst-plot} 
\usepackage{textcomp} 
\usepackage{nicefrac}
\usepackage{wrapfig}
\usepackage{algorithm}
\usepackage{algpseudocode}
\usepackage{framed}
\usepackage{float}

\usepackage[hyphenbreaks]{breakurl}
\usepackage{orcidlink}
\usepackage{datetime2}

\newcommand{\remove}[1] {}
\theoremstyle{plain}
\newtheorem{definition}{Definition}

\newtheorem{lemma}{Lemma}
\newtheorem{corollary}{Corollary}
\newtheorem{theorem}{Theorem}

\newtheorem*{fact*}{Fact}

\newtheorem{remark}{Remark}
\newtheorem*{remark*}{Remark}

\newtheorem*{example*}{Example}
\newtheorem*{theorem*}{Theorem}
\newtheorem*{maintheorem*}{Main Theorem}
\newtheorem*{corollary*}{Corollary}
\newtheorem*{summary*}{Summary of results}
\newtheorem*{importantrmk*}{Important remark}

\newcommand{\K}{(2+\gamma)(\Delta-1)+1}

\newcommand{\D}{\Delta}

\newcommand{\x}{\chi_a}
\newcommand{\EC}{\textsc{EdgeColor}}

\newcommand{\ValE}{\textsc{EdgeValidation}}

\newcommand{\Recolor}{\textsc{Recolor}}
\newcommand{\E}{\mathcal{E}}
\newcommand{\C}{\mathcal{C}}

\newcommand{\F}{\mathcal{F}}

\newcommand{\sco}{\mathrm{sc}}

\newcommand{\T}{\mathcal{T}}
\definecolor{dred}{rgb}{0.6, 0, 0}
\definecolor{dblue}{rgb}{0, 0, 0.6}
\definecolor{blue}{rgb}{0, 0, 0.9}

\newcommand{\nicetrees}{T}
\newcommand{\nicetreesc}{\mathcal{T}}
\DeclareMathOperator{\MSet}{\textbf{MSet}}
\newcommand{\MSetOdd}{\MSet_{-odd}}
\newcommand{\MSetTwo}{\MSet_{2}}
\newcommand{\UG}{\hat{G}}
\newcommand{\LG}{\check{G}}

\definecolor{dred}{rgb}{0.6, 0, 0}
\definecolor{dblue}{rgb}{0, 0, 0.6}
\definecolor{blue}{rgb}{0, 0, 0.9}

\usepackage[textsize=tiny]{todonotes}

\makeatletter
\def\namedlabel#1#2{\begingroup
   \def\@currentlabel{#2}%
   \label{#1}\endgroup
}
\makeatother

\date{\today}

\title[Acyclic chromatic index]{An improvement on the bound for the acyclic chromatic index
}

\author{Lefteris Kirousis \orcidlink{0000-0002-4912-8959}}\thanks{Corresponding author: L. Kirousis.}
\address{National and Kapodistrian University of Athens, Greece}
\email[Lefteris Kirousis]{lkirousis@math.uoa.gr}

\author{John Livieratos \orcidlink{0000-0001-6409-4286}}
\address{National and Kapodistrian University of Athens, Greece}
\email[John Livieratos]{jlivier89@math.uoa.gr}

\author{Alexandros Singh \orcidlink{0000-0002-6263-950X}}
\address{Universit\'{e} Paris 8, France}
\email[Alexandros Singh]{as@up8.edu}

\begin{document}
\maketitle


\begin{abstract}
 The {\em acyclic chromatic index} (or acyclic edge-chromatic number) of a graph
is the least number of colors needed to properly color its edges so that none
of its cycles has only two colors. We show that for a graph of max degree
$\Delta$, the acyclic chromatic index is at most $3.142(\Delta-1)+1$,
improving on the (best to date) bound of Fialho et al. (2020).
Our improvement is made possible by considering unordered
(non-plane) trees, instead of ordered (plane) ones, as witness structures for
the Lov\'{a}sz Local Lemma, a key combinatorial tool often used in related
works. The counting of these witness structures entails methods of Analytic Combinatorics.
\end{abstract}

\noindent {\bf Keywords:}  Chromatic index or edge-chromatic number, acyclic chromatic index, unordered (non-plane) trees.

\noindent {\bf Mathematics Subject Classification (2010):} 05C05, 05C85, 68R10.


\section{Introduction}\label{sec:intro}

The  {\em chromatic index} of $G$ is the least number of colors needed to properly color the edges of $G$, i.e., to color them so that no edges coincident on a common vertex get the same color.  It is known that the chromatic index of any graph is either $\Delta$ or $\Delta+1$ (Vizing \cite{vizing1965critical}), where $\Delta$ denotes the max degree of the graph. Nevertheless, observe that to generate a proper edge coloring by successively coloring its edges in a way that at each step, the chosen color does not violate properness and no color is ever changed necessitates a palette of at least $2\Delta -1$ colors, because $2\Delta -2 $ edges might be coincident with any given edge. 

The {\em acyclic chromatic index} of $G$, here denoted by $\chi'_{\alpha}(G)$,   is the least number of colors needed to properly color the edges of $G$ so that no cycle of even length is bichromatic (in any properly edge-colored graph, no cycle of odd length can be bichromatic).  It has been conjectured (J. Fiam\v{c}ik \cite{fiam} and Alon et al. \cite{alon2001acyclic}) that the acyclic chromatic index is at most  $\Delta +2$. 
 
 Besides the numerous publications for special cases of graphs, the  literature  on the acyclic chromatic index for general graphs with max degree $\Delta$  includes:  
 \begin{itemize}
 \item 	Alon et al. \cite{alon1991acyclic} proved $\chi'_{\alpha}(G) \leq 16\Delta$ (see also Molloy and Reed \cite[Theorem~2.2]{molloy1998further}).  Ndreca et al. \cite{{ndreca2012improved}}   
 improved this to  $\chi'_{\alpha}(G) \leq  \lceil9.62(\Delta-1)\rceil$.  
 \item Subsequently, Esperet and Parreau \cite{DBLP:journals/ejc/EsperetP13}  proved that $\chi'_{\alpha}(G) \leq 4( \Delta -1)$.   
 \item The latter bound was improved to  $\lceil3.74(\Delta -1)\rceil +1$ by Giotis et al. \cite{giotis2017acyclic}.  Also, an improvement of the $4( \Delta -1)$ bound was announced by  Gutowski et al. \cite{Gutowski2018} (the specific coefficient for $\Delta$ is not given in the abstract of the announcement). 
 \item  Finally, the  best bound until now  was given by Fialho et al. \cite{fialho2020new}, who proved that $\chi'_{\alpha}(G) \leq 3.569(\Delta -1)$.   \end{itemize} Here we show that $$\chi'_{\alpha}(G) \leq 3.142(\Delta-1) +1.$$
 The most recent results above are based on the algorithmic proof of the
Lov\'{a}sz Local Lemma by Moser \cite{moser2009constructive} and
Moser and Tardos \cite{moser2010constructive}, which uses an approach that has
been known as the {\em entropy compression method}. The use of this technique
to generate an acyclic coloring of a given graph goes, roughly and
qualitatively, as follows. We start by successively coloring all edges, each
time choosing uniformly at random (u.a.r.) among the set of colors that do not
violate properness. We then repeatedly recolor any bichromatic cycle that pops
up, with the choice of each bichromatic cycle being such that we generate a
{\em witness forest} whose nodes correspond to the chosen cycles. It turns out that it is
highly improbable to continue locating bichromatic cycles for long, i.e. it is
highly improbable to construct large witness forests, lest we get a
disagreement between the entropy of the  cycles that are structured as trees and the probability
of their being bichromatic.

 In previous results, the witness forests considered were ordered (plane).
Here we consider unordered (non-plane) trees, so we further restrict the
entropy. This necessitates estimating the number of degree-restricted, rooted,
unlabeled, unordered trees, which we do using a consequence of the smooth implicit-function schema presented by Flajolet and Sedgewick in \cite[Theorem VII.3, p. 268]{Flajolet:2009:AC:1506267}. 

 To prove the eventual halting of the algorithm, we use the technique of Giotis
et al.~\cite{giotis2017acyclic}, a special case of the {\em forward-looking
analysis} introduced later by  Harvey and Vondr\'{a}k \cite{harvey2015algorithmic}. Also, the probabilistic computations in   Giotis et al.  are   used to compute the probability of a cycle being bichromatic, as   Fialho et al. \cite{fialho2020new} later do.

We choose the colors so that a stronger than properness property is retained, namely that no bichromatic 4-cycles ever appear (for the reason, see Remark \ref{rmk:4-ac}).

 \subsection{Notation and terminology}\label{ssec:not}  In the sequel, we give some general notions and introduce the notation and terminology we use.

Throughout this paper, $G$ is a simple graph with $l$ vertices and $m$ edges, sometimes referred to as the {\em underlying graph}. All parameters pertaining to this graph are considered {\it constants}.  We use the letter $n$ mainly to denote  the number of  nodes of the witness structure an algorithm generates (a notion defined below). It is with respect  to  $n$ that we make asymptotic considerations.

The \emph{maximum degree} of $G$ is denoted by $\D$ and we assume, to avoid trivialities, that it is greater than $1$.

A $k$-cycle is a succession of $k\geq 3$ distinct vertices $u_1, \ldots, u_k$ and edges $e_1, \ldots,e_k$ so that for $i=1, \ldots, k-1$, $e_i$ connects  $u_i$ and $u_{i+1}$ and $e_k$ connects $u_{k}$ and $u_1$. Among the two possible consecutive traversals (cyclic orderings) of the edges  of  a cycle, we arbitrarily select one and call it {\em positive}. In the sequel, all cycles are considered in their positive traversal. The number of nodes (or edges)  in a cycle $C$  is denoted by $|C|$.

We assume, without loss of generality, that the sets of all
edges of the underlying graph $G$ are ordered by
some {\em predetermined order}. It is with respect to this order that 
we use terms such as ``first''  or ``successively"  for the elements of a given set of edges.  Analogously, for the cycles of the graph.
 
\begin{definition}\label{def:parity}
	Two edges  of a cycle of even length that  are separated by an odd number of edges  are said to have the same {\em parity}. 
\end{definition}

\begin{definition}
Given an edge $e$ and a cycle $C$ with edges in its positive traversal $(e=e_1, e_2, \ldots, e_s)$, the succession  of edges $(e=e_1, e_2, \ldots, e_{s-2})$ (i.e all but the last two) is caled the scope of the pair (e,C), and is denoted by $\sco(e,C)$. 	
\end{definition}
Obviously,  $|\sco(e,C)|$, the cardinality of the scope, is equal to   $|C|-2$.	Also always $e \in \sco(e,C)$.

An edge coloring, or simply a coloring, of $G$ is an assignment of colors to its edges chosen from a given ``palette" (set)  of colors.  
A coloring is  \emph{proper} if no adjacent edges have the same color.  All colorings considered will be proper, so in the sequel we omit this specification.

A cycle of a colored graph is called {\em bichromatic} if its edges  are colored
 by only two colors. In other words, if each edge in $C$ but two of opposite parity is homochromatic with the one among the excluded two with the same parity.  A  coloring is called $k$-acyclic if there are no bichromatic $k$-cycles and acyclic if there are no bichromatic cycles at all. Note that for a cycle to be bichromatic, its length must be even. The \emph{acyclic chromatic index} of $G$, denoted by $\x'(G)$, is the least number of colors needed for an acyclic coloring of $G$. 

A {\em rooted} tree is a tree equipped with a marked vertex, its {\em root}.
Following the usual terminology of enumerative combinatorics (as presented in,
e.g. Flajolet and Sedgewick \cite{Flajolet:2009:AC:1506267}) we say that a class of trees is {\em
unlabeled} if its elements are taken up to permutation of their vertex labels
and {\em unordered} (or {\em non-plane}) if all permutations of subtrees induced by
any given vertex are considered as generating the same tree. If instead we
assume that  different permutations as generating distinct trees we obtain the notion
of {\em ordered} (or {\em plane}) trees. The justification for the last
term is  that fixing a linear order of subtrees induced by each vertex
determines an embedding of said tree on the plane.
A rooted, unlabeled, unordered (non-plane) forest is an unordered
collection of rooted, unlabeled, unordered trees. Analogously for ordered (plane) forests. 
All trees and forests in this work are rooted and unlabeled, so we sometimes delete these specifications. However, they could be either ordered or unordered. All generating functions are ordinary.

\section{Algorithm \EC}\label{sec:color}
We first  give  a cornerstone result proven by Esperet and Parreau in~\cite{DBLP:journals/ejc/EsperetP13}:
\begin{lemma}[Esperet and Parreau~\cite{DBLP:journals/ejc/EsperetP13}]\label{lem:sufficientcolors} At any step of any successive coloring of the edges of a graph, in any order, there are at most 
$2 (\Delta-1)$ colors that should be avoided in order to produce a 4-\nolinebreak acyclic coloring. 
\end{lemma}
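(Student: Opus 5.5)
Fix a moment in the successive coloring. Let $e=uv$ be the edge about to be colored, and assume the current partial coloring is proper and 4-acyclic. A color $c$ is forbidden for $e$ exactly when one of the following holds:
\begin{itemize}
\item[(a)] $c$ already appears on an edge incident to $u$ or to $v$ (this would violate properness);
\item[(b)] coloring $e$ with $c$ closes a bichromatic 4-cycle $u v x y$, with $vx$ and $yu$ colored and $xy$ colored $c$, while $vx$ and $yu$ share a common color $c'$.
\end{itemize}
The plan is not to bound (a) and (b) separately, which would give far more than $2(\Delta-1)$ colors. Instead I will charge every forbidden color to one of the at most $2(\Delta-1)$ edges $f\neq e$ incident to $u$ or $v$, with each such $f$ charged at most once. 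Equivalently, I will show that the forbidden colors are covered by a family of colors indexed injectively by those edges.

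The charging works as follows. For each colored edge $f$ incident to $u$, define a ``slot'' for $f$, and do the same for $f$ incident to $v$.
\begin{itemize}
\item[(a$'$)] Each color $c'$ appearing at both $u$ and $v$, say on $ux'$ and $vy'$, is counted twice in (a) but forbids only one color. This frees one slot, either the one of $ux'$ or the one of $vy'$.
\item[(b$'$)] A type-(b) color $c$ needs such a common color $c'$: it must appear on $vx$ and on $yu$, with $xy$ colored $c$.
\end{itemize}
For each common color $c'$ there is a unique $x$ with $vx$ colored $c'$ and a unique $y$ with $uy$ colored $c'$, by properness. Hence $c'$ determines at most one 4-cycle through $e$, and therefore at most one type-(b) forbidden color $c$, namely the color of $xy$ if that edge exists and is colored. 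So each common color $c'$ produces at most one extra forbidden color, and that color can be charged to the slot freed by the double count in (a$'$).

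Summing up, the number of forbidden colors is at most
\[
|\{\text{colors at }u\}\cup\{\text{colors at }v\}| + \#\{\text{common colors}\},
\]
and this equals $|\text{colors at }u|+|\text{colors at }v| \le 2(\Delta-1)$.

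The main point to check carefully is the uniqueness claim in the middle step. Given $c'$, the vertices $x$ and $y$ are determined by properness at $v$ and at $u$, so the candidate cycle is unique. I also need to handle the degenerate case $x=y$: then the configuration is a triangle rather than a 4-cycle, and it forbids no additional color because properness already prevents it. I will also note that only edges other than $e$ count, which is why the bound is $\Delta-1$ per endpoint. Finally, the argument never uses the order in which the edges are colored, so it holds for any successive coloring, as the statement of Lemma~\ref{lem:sufficientcolors} claims.
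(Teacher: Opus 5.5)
Your proposal is correct and follows the paper's own argument: a color common to both endpoints of $e$ is counted twice among the at most $2(\Delta-1)$ edges adjacent to $e$ but forbids only one color for properness, and the unique 4-cycle it determines adds at most one further forbidden color, so the total stays at most $2(\Delta-1)$. The degenerate case $x=y$ you mention is in fact impossible outright, since $ux$ and $vx$ would then be adjacent edges of the same color in the current proper coloring.
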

\begin{proof}[Proof Sketch]
Notice that for each edge $e$, one has to avoid the colors of all edges  adjacent to $e$, and moreover  for each pair of homochromatic (of the same color) edges $e_1, e_2$  adjacent to $e$ at different endpoints (which contribute one to the count of colors to be avoided), one has also to avoid the color of the at most one edge $e_3$ that together with $e,e_1, e_2$ define a cycle of length 4. Thus, easily, the total count of colors to be avoided does not exceed the number of adjacent edges of $e$, which is at most $2(\Delta-2)$. \end{proof}
\begin{remark}\label{rmk:4-ac}
The fact that by Lemma \ref{lem:sufficientcolors} 
4-acyclicity can be attained
by avoiding  no more colors than those needed to attain properness, allows us to introduce  an algorithm that selects colors among those that also retain 4-acyclicity (besides properness). 
\end{remark}
In the sequel, we assume that our  palette has  at least $\K$ colors for some $\gamma \geq
1$. We will compute the least such $\gamma$ for which there is an acyclic
edge-coloring. It will turn out that $\gamma  \geq 1.142$


We now give the following algorithm, which {\it if and when} it halts, obviously produces an acyclic coloring.

\begin{algorithm}[H]
\caption{\EC}\label{alg:ec}
\vspace{0.1cm}
\begin{algorithmic}[1]
\Statex \underline{Input:} the graph $G$ for which an acyclic coloring is to be constructed.
\State Successively assign to each edge of $G$  a color selected u.a.r. from the set of  \Statex \hspace{3.6em} colors whose assignment  does not violate 4-acyclicity  \label{ec:color}
\While{there is an edge in a  bichromatic cycle} \label{ec:while}
\State let $e$ be the first such edge and $C$ the first such cycle \Comment{$C$ is necessarily \Statex \hspace{3.6em}   of even length $\geq 6$}
\State \Recolor($e,C$)\label{ec:recolor}
\EndWhile\label{ec:endwhile}
\State \textbf{return} the current coloring.
\end{algorithmic}
\begin{algorithmic}[1]
\vspace{0.1cm}
\Statex \underline{\Recolor($e, C$)} 
\State Successively assign to  each edge of $\sco(e,C)$  a color selected u.a.r. from the set   \Statex \hspace{3.6em} of colors whose assignment  does not violate 4-acyclicity \label{ecrecolor:color}
\While{$\sco(e,C)$ shares an edge with a bichromatic cycle} 
\State let $e'$ be the first such edge and $C'$ the first such cycle \Comment{$C'$ could be $C$}
\label{ecrecolor:while}
\State \Recolor($e', C'$)\label{ecrecolor:recolor}
\EndWhile\label{ecrecolor:endwhile}
\end{algorithmic}
\end{algorithm}

The calls of   \Recolor\ of an   execution of \EC\  generate  an unordered   forest  as follows:
the calls made at line \ref{ec:color} of \EC\ generate the roots of the trees of the forest;    whereas the calls of \Recolor\ made at line \ref{ecrecolor:color} of a preceding call of \Recolor\ generate the children of the node generated by  this preceding call. We mark each of the nodes of this forest with the pair $(e,C)$ that is the input of the corresponding call of \Recolor. The edge $e$ is called the edge-mark of the node, whereas $C$ is called its cycle-mark.  We then add to this forest a minimum number of leaves,  so that all nodes corresponding to calls of \Recolor$(e,C)$ become internal with $|C|-2$ children. The leaves are left unmarked.   Thus,  all the internal nodes of the forest generated by an execution of \EC\ have even degree $\geq 4$, since only cycles of even length $\geq 6$ that become bichromatic are recolored by \EC\ (the bichromaticity of cycles of length 4 is avoided when choosing colors).

\begin{definition}\label{def:witness}
We call {\em witness forest} of an execution of \EC\ the marked and  unordered forest generated by the execution.	Obviously, the witness forest of an execution  is uniquely defined, however it is conceivable that different executions generate the same witness forest. The witness forest is a random variable. 
\end{definition}

\begin{remark}\label{rmk:marks}
The marking of the nodes of the witness forest does not render the forest as a labeled one. The number of possible witness forests will be computed without taking into account their marks. 
\end{remark}

 Before delving into the probabilistic analysis of \EC, let us show the following lemma. We call it the progress lemma, because it intuitively states that at every time a call of \Recolor\ terminates, some progress is  made, which is then preserved in subsequent phases.
\begin{lemma}[Progress Lemma] \label{lem:progr}
Consider a call of \Recolor\ applied to  $(e,C)$ and  let $\mathcal{E}_0$ 
be the set of edges that at the beginning of the call do not belong to a bichromatic cycle.  Then, if and when the {\bf while}-loop of  \Recolor($e,C$) terminates, no edge in   $\mathcal{E}_0\cup \sco(e,C)$ belongs to a bichromatic cycle. \end{lemma}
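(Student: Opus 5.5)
Suppose the \textbf{while}-loop of \Recolor($e,C$) terminates, and fix an edge $f \in \mathcal{E}_0\cup \sco(e,C)$. We must show that at that moment $f$ lies in no bichromatic cycle. The plan is to argue by contradiction, using the termination condition of the loop for edges of $\sco(e,C)$ and a structural induction on the recursion for edges of $\mathcal{E}_0$. The inductive statement is the Progress Lemma itself, applied to every call of \Recolor\ made inside the \textbf{while}-loop. These calls terminate before the outer call does, so induction on recursion depth (equivalently, on the size of the subtree of the witness forest rooted at the node) is legitimate. The innermost calls are those whose \textbf{while}-loop is never entered; for them the argument below works without appeal to any recursive call.

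\emph{Edges of the scope.} If the loop terminates, its guard is false. Hence no edge of $\sco(e,C)$ shares an edge with a bichromatic cycle, so $\sco(e,C)$ contains no edge of any bichromatic cycle. This part is immediate.

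\emph{Edges of $\mathcal{E}_0$.} Suppose, for contradiction, that some $f\in\mathcal{E}_0$ lies in a bichromatic cycle $C^*$ at termination. At the beginning of the call, $f$ lay in no bichromatic cycle. The colors of edges change only through line~\ref{ecrecolor:color} of \Recolor, either in the present call or in the nested calls \Recolor($e',C'$). I will show that every recoloring keeps the invariant \emph{``every bichromatic cycle meets the set of edges recolored so far in this call''}. Line~\ref{ecrecolor:color} of the present call recolors only $\sco(e,C)$. Any cycle that is bichromatic afterwards but was not bichromatic at the beginning must contain an edge whose color changed, hence an edge of $\sco(e,C)$. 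For a cycle through $f$ this is forced, since no cycle through $f$ was bichromatic at the start.

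Now consider a nested call \Recolor($e',C'$). Let $\mathcal{E}_0'$ be the set of edges in no bichromatic cycle at its start. By the induction hypothesis, on its return no edge of $\mathcal{E}_0'\cup\sco(e',C')$ is in a bichromatic cycle. Thus the bichromatic cycles after that return consist entirely of edges not in $\mathcal{E}_0'\cup \sco(e',C')$. Each such edge was already in a bichromatic cycle at the start of the nested call, and, tracing back through earlier nested calls, at the start of the present call or just after some recoloring step of the present call.

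The key claim is then: at any moment during the loop, every edge lying in a bichromatic cycle lies outside $\mathcal{E}_0$. I would prove this by induction over the sequence of returns of nested calls. A nested call returns with bichromatic edges only from the complement of $\mathcal{E}_0'$. Applying the claim at the start of the nested call shows that this complement avoids $\mathcal{E}_0$. The subtle point, and the main obstacle, is the first recoloring of $\sco(e,C)$ in line~\ref{ecrecolor:color}. It may create bichromatic cycles through edges of $\mathcal{E}_0$, since such a cycle need only meet $\sco(e,C)$. The claim as stated is therefore false in the middle of the call and must be weakened. The correct invariant is that every bichromatic cycle meets $\sco(e,C)$; this excludes the original ones, which cannot contain edges of $\mathcal{E}_0$ anyway. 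Once the loop terminates, no bichromatic cycle meets $\sco(e,C)$, so by the invariant there are no new bichromatic cycles at all. The only ones possible are cycles entirely outside $\mathcal{E}_0$, and hence none contains $f$.

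To establish this invariant across a nested call, I will use the induction hypothesis. Any bichromatic cycle present on return either already existed at the nested call's start, and then it meets $\sco(e,C)$ by the invariant, or is new, and then it has an edge outside $\mathcal{E}_0'$. I expect the real care to lie in making this last step airtight, possibly by strengthening the inductive statement to: ``on termination, every bichromatic cycle consists only of edges that were in bichromatic cycles at the start of the call.''
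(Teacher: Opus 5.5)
Your treatment of the scope edges is fine, but the $\mathcal{E}_0$ part has a genuine gap exactly at the step you flag. Your inductive hypothesis (the Progress Lemma for a nested call \Recolor$(e',C')$) is an \emph{edge-level} statement. On return it only guarantees that each edge of a bichromatic cycle lay in \emph{some} bichromatic cycle at the start of the nested call. It does not say that the cycle itself was bichromatic then. So it cannot exclude a new bichromatic cycle $D$, formed during the nested call, that avoids $\sco(e,C)$ and passes through $f\in\mathcal{E}_0$. Tracing $f$ backwards through the returns only yields bichromatic cycles through $f$ at earlier guard evaluations. The earliest of these is necessarily one created by line~1 of \Recolor$(e,C)$, which meets $\sco(e,C)$ and is fully consistent with the loop continuing, so no contradiction arises. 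Your proposed repair does not help. ``Every bichromatic cycle at termination consists only of edges that were in bichromatic cycles at the start'' is just a restatement of the $\mathcal{E}_0$ half of the lemma. (Your weakened invariant can also hold only at guard-evaluation times, since nested calls recolor edges outside $\sco(e,C)$.)

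The missing idea is to argue at the level of cycles, and then no induction is needed; this is the paper's argument. Suppose that at termination some bichromatic cycle $D$ contains an edge recolored during the call. Look at the \emph{last} color assignment to an edge $g$ of $D$. It was made at line~1 of some call $R^*$ that is either \Recolor$(e,C)$ or nested in it, so $g$ lies in the scope of $R^*$. Afterwards no edge of $D$ changes color, so $D$ stays bichromatic until the end, in particular at the final evaluation of the guard of $R^*$. Hence $R^*$ could not have terminated, yet it terminates no later than the outer call. Therefore every cycle that is bichromatic at termination has all its colors unchanged since the start of the call. It was thus already bichromatic then, and so contains no edge of $\mathcal{E}_0$. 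If you prefer to keep your induction, take this cycle-level statement as the inductive hypothesis. Your invariant then propagates across each return: at each guard evaluation, every bichromatic cycle either meets $\sco(e,C)$ or has been untouched since the start. The false guard at termination finishes the proof.
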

\begin{proof}
Suppose that the {\bf while}-loop of \Recolor($e,C$) terminates and there is some  edge   $e'\in\E_0\cup \sco(e,C)$ that belongs to  a bichromatic cycle.

If $e'\in \sco(e,C)$, then   \Recolor($e,C$) could not have terminated. So assume, towards a contradiction,  that $e'\in\E_0$.

So, it must be the case that at some point during the {\bf while}-loop of \Recolor$(e,C)$, a cycle $C'$ that contains $e'$ turned bichromatic and remained so until the end of \Recolor($e,C$). Assume that $C'$ became bichromatic because of  some call of \Recolor\ within \Recolor$(e,C)$. Then the {\bf while}-loop of this call  could not terminate, because $C'$  stays bichromatic. Therefore \Recolor$(e,C)$ does not terminate either, a contradiction. 
\end{proof}
By the Progress Lemma \ref{lem:progr}, we easily get that:

\begin{corollary}\label{cor:witnessProp}
A witness forest satisfies:

\begin{enumerate}
\item\label{ite:fir} pairwise,  the scopes of the cycle-marks of the roots   do not share an edge, so the edge-marks of the roots are pairwise distinct.
\item \label{ite:sec} pairwise the  cycle-marks of the children of every node do not share an edge, so the corresponding edge-marks are pairwise distinct.
\item \label{ite:thi} if $(e,C)$ is the mark of a node $u$, the edge-mark of any child of $u$ belongs to $\sco(e,C)$. 
\end{enumerate}
\end{corollary}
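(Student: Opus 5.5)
The plan is to derive each item of Corollary~\ref{cor:witnessProp} from the Progress Lemma~\ref{lem:progr}, together with the way \EC\ and \Recolor\ choose which pair $(e',C')$ to recolor next. I will use one invariant throughout: whenever a call \Recolor$(e',C')$ is made, the cycle $C'$ is bichromatic at that moment and $e'$ is the first edge lying on a bichromatic cycle.

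Item~(\ref{ite:thi}) is the easiest and I will do it first. By line~\ref{ecrecolor:while} of \Recolor$(e,C)$, a child call \Recolor$(e',C')$ is made only when $e'$ is the first edge of $\sco(e,C)$ that is shared with a bichromatic cycle. Hence $e'\in\sco(e,C)$.

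For item~(\ref{ite:sec}), let $(e_1,C_1)$ and $(e_2,C_2)$ be the marks of two children of $u$, with the call on $(e_1,C_1)$ made earlier. When \Recolor$(e_1,C_1)$ terminates, the Progress Lemma says that no edge of $\E_0\cup\sco(e_1,C_1)$ lies on a bichromatic cycle, where $\E_0$ is the set of edges not on a bichromatic cycle when that call began. Iterating this over the successive sibling calls, I will show that after each child call, every edge of the scopes of the earlier siblings lies on no bichromatic cycle. Then $C_2$ cannot share an edge with $\sco(e_1,C_1)$ at the moment $C_2$ is chosen, since $C_2$ is bichromatic then. To go from "scopes are disjoint from later cycles" to the stated disjointness, I also need the reverse direction: $C_1$ and $\sco(e_2,C_2)$ share no edge. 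Here I expect to argue that when the second call is made, $C_1$ is no longer bichromatic, so neither the edges of $C_1$ nor their status constrains anything at that point. If "do not share an edge" is meant literally for the full cycles rather than scopes, this is the step I expect to be the main obstacle. My first attempt would be to reread the statement as a statement about scopes, in line with item~(\ref{ite:fir}).

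Distinctness of the edge-marks then follows from what is already proved. Each edge-mark $e_i$ lies in its own scope, and $e_2\in C_2$. So if $e_1=e_2$, then $C_2$ would contain the edge $e_1\in\sco(e_1,C_1)$ while being bichromatic, contradicting the previous step.

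Item~(\ref{ite:fir}) is proved in the same way at the top level. The Progress Lemma applied to each root call shows that after it returns, no edge of its scope, and no edge already free beforehand, lies on a bichromatic cycle. Every later root call is made on a bichromatic cycle, so its cycle avoids all earlier root scopes. Distinctness of the root edge-marks follows exactly as in the previous paragraph.
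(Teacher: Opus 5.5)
Your proof follows the paper's route. The paper gets the corollary in one line from the Progress Lemma. Your induction over successive sibling (resp.\ root) calls spells out that step: an earlier scope that is free when the next call starts lies in that call's $\E_0$, so it stays free. Together with the fact that each new cycle-mark is bichromatic when chosen, this is what the paper's ``easily'' amounts to. Items (a) and (c), and the distinctness of edge-marks in (b), are correctly established.

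One correction on (b): the ``reverse direction'' you say you need is neither needed nor true in general.

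\emph{It is not needed.} From $C_2\cap\sco(e_1,C_1)=\emptyset$ and $\sco(e_2,C_2)\subseteq C_2$ you already get $\sco(e_1,C_1)\cap\sco(e_2,C_2)=\emptyset$. This forward fact is also the only thing you use to get $e_1\neq e_2$.

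\emph{It can fail.}
\begin{itemize}
\item The two edges of $C_1$ outside $\sco(e_1,C_1)$ lie on the bichromatic $C_1$ when \Recolor$(e_1,C_1)$ starts. So they belong neither to $\E_0$ nor to the scope, and the Progress Lemma says nothing about them.
\item Take a cycle $C_2$ that is bichromatic at that moment, passes through one of these two edges, and is untouched during the call. It stays bichromatic and may be chosen next.
\item The shared edge can even lie inside $\sco(e_2,C_2)$, for instance when it is $e_2$ itself.
\end{itemize}
Your remark that $C_1$ is no longer bichromatic does not yield any disjointness; it is rather the reason none can be concluded.

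For the same reason, the literal full-cycle reading of (b) does not follow from the Progress Lemma. Reading (b) as a statement about scopes, parallel to (a), is therefore the right call. That version, together with the children's edge-marks being distinct elements of the parent's scope, is all that is used later, and your argument proves it.
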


The Progress Lemma \ref{lem:progr}, however,  does not exclude  the possibility of  a call of \Recolor\  lasting infinitely long  by recycling  sub-calls of \Recolor;   therefore this lemma  does not exclude the possibility of an execution of \EC\ never halting. 

At this point, let us remind the reader that a function of $n$ is said to be inverse exponential  (or decay exponentially) in $n$, if it is $O(c^n)$ for some positive constant $c< 1$. Any constant {\em positive} power (even with exponent $<1$) of an inverse exponential in $n$ function is inverse exponential in $n$  and  multiplying an inverse exponential with a polynomial gives an inverse exponential.

Let now $P_n$ be the probability that \EC\ generates a witness forest with  $n$ nodes. In Subsection \ref{ssec:vale},
we will prove  the following:

\begin{lemma} \label{factt1} If at least $\K$ are available for some $\gamma \geq 1.142$, then $P_n$    is inverse exponential in $n$.
\end{lemma}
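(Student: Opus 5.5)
We follow the forward-looking strategy of Giotis et al. and bound $P_n$ through an auxiliary validation procedure. Define \ValE: given a candidate marked forest $\F$ (satisfying the structural properties of Corollary \ref{cor:witnessProp}), first color all edges u.a.r. among colors preserving 4-acyclicity. Then traverse the nodes of $\F$ in a fixed order (say depth-first, children taken in the order of their edge-marks). At each node marked $(e,C)$, check whether $C$ is currently bichromatic; if so, recolor $\sco(e,C)$ u.a.r. as in \Recolor, and otherwise report failure. If \EC\ produces witness forest $\F$, then \ValE\ run on $\F$ with the same random choices succeeds. Hence
\[
P_n \le \sum_{\F} \Pr[\ValE(\F)\text{ succeeds}],
\]
where the sum ranges over all witness forests with $n$ nodes.

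\textbf{Step 1 (probability per forest).} When a node $(e,C)$ is checked, only the colors of $\sco(e,C)$, i.e. $|C|-2$ edges, need to be fixed for $C$ to be bichromatic: the two remaining edges determine the two colors. Since the coloring is fresh u.a.r., each such edge has at least $\K-2(\Delta-1)=\gamma(\Delta-1)+1$ admissible choices by Lemma \ref{lem:sufficientcolors}. So, conditional on the past, $C$ is bichromatic with probability at most $(\gamma(\Delta-1))^{-(|C|-2)}$. By the chain rule over the nodes,
\[
\Pr[\ValE(\F)\text{ succeeds}]\le \prod_{u}\bigl(\gamma(\Delta-1)\bigr)^{-(|C_u|-2)}.
\]

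\textbf{Step 2 (counting marks).} By Corollary \ref{cor:witnessProp}, the edge-mark of a child is an edge of the parent's scope. Because the children's scopes are disjoint, the forest shape determines which scope position each child occupies. The same holds for roots: at most $m$ choices each, contributing only a factor $2^m$ or so. Given the edge-mark $e$, the number of cycles of length $2k+2$ through $e$ is at most $(\Delta-1)^{2k}$. So a node with cycle-mark of length $2k+2$, which has $2k$ children, contributes at most $(\Delta-1)^{2k}\cdot(\gamma(\Delta-1))^{-2k}=\gamma^{-2k}$. Thus
\[
P_n\le \mathrm{poly}\cdot [z^n]\,T(z),
\]
where $T$ is the generating function of unordered rooted trees in which every internal node has even out-degree $2k\ge 4$ and carries weight $\gamma^{-2k}$. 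The unordered point is crucial: because children's marks are determined by their distinct scope positions, permuting children gives the same marked forest. Consequently $T$ satisfies the P\'olya-type equation
\[
T(z)=z\Bigl(1+\sum_{k\ge2}\gamma^{-2k}\, Z(\mathfrak S_{2k})\bigl(T(z),T(z^2),\dots\bigr)\Bigr),
\]
with the appropriate treatment of leaves.

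\textbf{Step 3 (asymptotics).} This is the main obstacle. We apply the smooth implicit-function schema \cite[Theorem VII.3]{Flajolet:2009:AC:1506267}, treating the terms in $T(z^j)$ for $j\ge2$ as analytic perturbations, which is valid since they have a larger radius of convergence. This gives $[z^n]T\sim c\,\rho^{-n}n^{-3/2}$, where $\rho$ is determined by the characteristic system $G(\rho,\tau)=\tau$, $G_w(\rho,\tau)=1$. Then $P_n$ is inverse exponential exactly when $\rho>1$, equivalently when the characteristic system has no solution with $\rho\le1$. We would verify numerically that $\rho=1$ is reached at $\gamma\approx1.142$ and that $\rho$ is increasing in $\gamma$, which gives the claim for all $\gamma\ge1.142$. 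The delicate part is justifying the perturbation of the $T(z^j)$ terms and obtaining the numerics rigorously, for instance by truncation together with explicit tail bounds.
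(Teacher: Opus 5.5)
Your architecture (validation, union bound over forests, analytic count of unordered trees with even out-degrees $\ge 4$) is the paper's. The gap is in Step~2: it uses the naive union bound over whole cycles, and with that bound the threshold in Step~3 is not $1.142$.

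\textbf{The main gap.} Your weight $\gamma^{-2k}$ for a node with $2k$ children is $\gamma^{-1}$ per child. Out-degrees in a tree on $n$ nodes sum to $n-1$, so every tree gets weight exactly $\gamma^{-(n-1)}$. Your series is therefore just $\gamma\,T_0(z/\gamma)$, where $T_0$ is the unweighted series with radius $\rho_0\in[0.6677,0.6678]$ (Theorem~\ref{thm:boundstoradius}). Its radius is $\gamma\rho_0$, so you need $\gamma>1/\rho_0\approx 1.497$. At $\gamma=1.142$ your bound grows like $(1.142\cdot 0.6677)^{-n}\approx 1.31^{n}$, and the numerical check you promise would fail.

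The missing idea is the estimate of Giotis et al.\ (Lemma~\ref{lem:boundadmissible}, Corollary~\ref{cor:boundadmissible}). The validation input fixes only $(e^1,e^2,k)$, not the whole cycle. You enumerate the $\le\Delta-1$ candidates for each later edge and pay $p=1/(\gamma(\Delta-1)+1)$ for each, giving $(\Delta-1)p\approx 1/\gamma\approx 0.876$ per step. The paper instead bounds the probability that \emph{some} candidate carries the prescribed colour by $1-(1-p)^{\Delta-1}\le 1-e^{-1/\gamma}\approx 0.583$ per step, with the closing edge forced (factor $p$). After paying $\Delta-1$ for $e^2$, a node with $2k-2$ children gets weight $\frac{1}{\gamma}(1-e^{-1/\gamma})^{2k-3}$. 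Dropping $1/\gamma\le 1$ and spreading the rest over the $2k-2\ge 4$ children leaves at most $(1-e^{-1/\gamma})^{3/4}$ per node. The condition $(1-e^{-1/\gamma})^{3/4}<0.6677\le\rho_0$ holds for $\gamma\ge 1.142$. This is also why the paper needs no weighted generating function, no $n^{-3/2}$ asymptotics and no monotonicity in $\gamma$: Cauchy--Hadamard on the unweighted count suffices.

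\textbf{A second gap, shared with the paper.} Your justification for counting only unordered shapes does not work. Corollary~\ref{cor:witnessProp} makes the children's edge-marks distinct scope edges, but an unordered shape does not record which child subtree sits at which scope position. Take a node with children $\{\text{leaf},\text{leaf},A,B\}$ and $A\not\cong B$. It admits $4\cdot 3=12$ placements, i.e.\ 12 different marked forests (and admissible sequences) that the union bound must pay for. Summing these multiplicities over unordered shapes gives back the count of ordered trees. Your sentence ``permuting children gives the same marked forest'' is true only if the marks are permuted together with the subtrees. The paper's proof of Lemma~\ref{lem:finalprob} makes the same assertion (``given the cycle-mark of an internal node, the edge-marks of its children become fixed''), so here you are not departing from it. Still, disjointness of scopes is not an argument for it, and you should not treat it as settled by Corollary~\ref{cor:witnessProp}.
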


Because the outdegrees of internal nodes of the witness forest are constants (the lengths of cycles are considered constants), the probability that \EC\ generates a witness forest with  $n$ nodes (leaves included) is inverse exponential iff the probability that \EC\ generates a witness forest with at $n$ internal nodes is inverse exponential. The latter is true iff the probability that \EC\ makes  $n$ calls of \Recolor\ is inverse exponential, which holds iff the probability that \EC\ makes at least  $n$ calls of \Recolor\ is inverse exponential.

Therefore, from Lemma \ref{factt1}, by the probabilistic method (to prove existence it suffices to prove that the probability of existence is positive),   we   immediately get the theorem below,  
our main result.
\begin{maintheorem*}\namedlabel{maintheorem}{Main Theorem}
$\lceil 3.142 (\Delta -1) \rceil +1$ colors suffice to properly and acyclically color a graph.	\end{maintheorem*}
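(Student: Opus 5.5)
The Main Theorem follows from Lemma~\ref{factt1}, which is the real content to be proved. The plan is in two parts. First, reduce the Main Theorem to the lemma. Second, prove the lemma by the forward-looking analysis of Giotis et al., combined with an enumeration of unordered trees.

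\emph{Reduction.} Take $\gamma = 1.142$, so the palette has $\lceil 3.142(\Delta-1)\rceil+1 \geq (2+\gamma)(\Delta-1)+1$ colors. Let $Q_n$ be the probability that \EC\ makes at least $n$ calls of \Recolor. As argued after Lemma~\ref{factt1}, $Q_n$ is inverse exponential. Hence for large $n$ we have $Q_n<1$, so some execution halts. By Lemma~\ref{lem:sufficientcolors} every color assignment preserves properness and 4-acyclicity. When the outer {\bf while}-loop exits there is no bichromatic cycle. So the coloring returned is proper and acyclic.

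\emph{Proof of Lemma~\ref{factt1}.} I would bound $P_n$ by a union bound over witness structures, weighted by marks.
\begin{itemize}
\item[(i)] For each fixed marked forest $F$, bound the probability that the execution's witness forest is $F$. To do this, introduce a validation procedure (\ValE) that recolors the scopes in the order dictated by $F$ and checks each cycle-mark for bichromaticity. By Lemma~\ref{lem:sufficientcolors}, at each step at least $\gamma(\Delta-1)+1$ colors are available. Following Giotis et al., a cycle of length $2k$ then becomes bichromatic with probability at most $(\gamma(\Delta-1))^{-(2k-2)}$, roughly one factor per scope edge.
\item[(ii)] Count the marks. By Corollary~\ref{cor:witnessProp}, each child's edge-mark lies in its parent's scope, and sibling edge-marks are distinct. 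So the edge-marks are determined by the shape. A cycle of length $2k$ through a given edge can be chosen in at most $(\Delta-1)^{2k-2}$ ways. The $(\Delta-1)$ powers cancel, leaving weight $\gamma^{-(2k-2)}$ per internal node of degree $2k-2$. The roots contribute at most $m$ choices, a constant factor.
\item[(iii)] It remains to show that $\sum_F \prod_{u}\gamma^{-(\deg u)}$, over unordered rooted trees with $n$ nodes whose internal outdegrees are even and at least $4$, is inverse exponential in $n$.
\end{itemize}

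\emph{Main obstacle.} Step (iii), the enumeration of unordered trees, is the hard part. Pólya's symmetric-function construction gives the generating function
\[
T(z)=z\Bigl(1+\sum_{k\ge 2}\gamma^{-2k}Z(S_{2k})(T(z),T(z^2),\dots)\Bigr),
\]
where $Z(S_{2k})$ is the cycle index of the symmetric group $S_{2k}$. The terms $T(z^j)$ with $j\ge2$ are analytic in a larger disk. So the smooth implicit-function schema (Flajolet--Sedgewick, Theorem VII.3) applies and yields coefficients of order $\rho^{-n}n^{-3/2}$. We need $\rho>1$, i.e.\ the weighted count must decay exponentially.

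Concretely, I would locate the characteristic system $\tau=G(\rho,\tau)$, $1=G_w(\rho,\tau)$ and check numerically that the singularity satisfies $\rho>1$ exactly when $\gamma\ge1.142$. I expect the threshold to require careful truncation of the $T(z^j)$ terms, bounded rigorously. That numerical certification is where I anticipate the most difficulty.
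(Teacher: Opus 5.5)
\paragraph{Verdict.} Your reduction and overall architecture match the paper: a validation algorithm, a union bound over marked forests, and an enumeration of unordered trees with even outdegrees $\geq 4$. There is a genuine gap, however: the probability estimate in (i)--(ii) is too weak to reach $\gamma=1.142$, so the numerical check you plan at the end would fail.

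\paragraph{Why your bound cannot give $\gamma=1.142$.} You charge $\gamma^{-\deg u}$ per internal node. The outdegrees of a tree with $n$ nodes sum to $n-1$, so the quantity in (iii) is just $\gamma^{-(n-1)}t_n$, where $t_n$ is the unweighted count of Lemma~\ref{lem:boundtrees}. No weighted cycle-index analysis is needed. By Theorem~\ref{thm:boundstoradius}, $\limsup_n t_n^{1/n}=\rho_0^{-1}$ with $0.6677\leq\rho_0\leq0.6678$. Hence your bound decays only if $\gamma\rho_0>1$, i.e.\ $\gamma>1/\rho_0\approx1.497$. At $\gamma=1.142$ the weighted series has radius $\gamma\rho_0\approx0.763<1$, so your bound grows exponentially. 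This route yields roughly $3.5(\Delta-1)+1$ colors, not $3.142(\Delta-1)+1$.

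\paragraph{The missing ingredient.} What you need is the Giotis et al.\ estimate, Lemma~\ref{lem:boundadmissible}.
\begin{enumerate}
\item Do not fix the whole cycle-mark and pay $(\Delta-1)^{2k-2}$ choices for it. Fix only its first two edges. The at most $\Delta-1$ choices of the second edge are the only choices counted in Lemma~\ref{lem:finalprob}. Together with the factor $p=1/(\gamma(\Delta-1)+1)$ for the closing edge, they contribute at most $1/\gamma$.
\item Then walk along the cycle. At each of the remaining $2k-3$ steps, bound the probability that \emph{at least one} of the at most $\Delta-1$ candidate next edges carries the required color. This is $1-(1-p)^{\Delta-1}\leq1-e^{-1/\gamma}$, whereas your union bound pays $(\Delta-1)p\leq1/\gamma$ per step.
\item Each internal node then carries $\frac{1}{\gamma}(1-e^{-1/\gamma})^{2k-3}$ instead of your $\gamma^{-(2k-2)}$.
\item Spreading this over its $2k-2\geq4$ children gives at most $(1-e^{-1/\gamma})^{3/4}$ per node.
\end{enumerate}
At $\gamma=1.142$ this is about $0.66755<0.6677\leq\rho_0$, and that is exactly where the constant comes from. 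For comparison, at this $\gamma$ the per-step bases are $1-e^{-1/\gamma}\approx0.583$ versus your $1/\gamma\approx0.876$.

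\paragraph{A further caution on (ii).} Membership in the scope and pairwise distinctness only confine the children's edge-marks to distinct scope edges. An unordered shape does not record which child occupies which scope position. So ``the edge-marks are determined by the shape'' needs an explicit argument; the paper asserts the same point in the proof of Lemma~\ref{lem:finalprob}.
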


\section{Validation Algorithm}\label{ssec:vale}
Towards finding an upper    bound of the probability that   \EC\ generates a witness forest with $n$ nodes,
we consider  the   algorithm \ValE\ below that takes as input an arbitrary  sequence $$\mathcal{S} = (e_1^1, e_1^2, k_1),\ldots, (e_s^1, e_s^2, k_{s}),$$ such that  
for all $i=1, \ldots, s$, there is a cycle $C_i$ of length $2k_i\geq 6$ that contains $e_i^1$ and  $e_i^2$ as successive
edges  in its positive traversal when we start from $e_i^1$. We call such sequences {\em admissible sequences.}

\renewcommand{\labelenumi}{(\alph{enumi})}
\renewcommand{\theenumi}{(\alph{enumi})}
\algnewcommand{\myIf}[1]{\State\algorithmicif\ #1}
\algnewcommand{\myThen}[1]{\State\algorithmicthen\ #1}
\algnewcommand{\myEndIf}{\State \algorithmicend\ \algorithmicif}
\algnewcommand{\myElse}[1]{\State\algorithmicelse\ #1}
\newcommand\algotext[1]{\end{algorithmic}#1\begin{algorithmic}[1]}

\begin{algorithm}[H]
\caption{\ValE$(\mathcal{S})$}\label{alg:vale}
\begin{algorithmic}[1]
\Statex \underline{Input:} An admissible sequence $\mathcal{S} =   (e^1_i, e^2_i, k_i), i=1, \ldots, s$ of  $G$.
\State Successively assign to each edge of $G$  a color selected u.a.r. from the set of  \Statex \hspace{3.6em} colors whose assignment  does not violate 4-acyclicity  \label{val:color}
\State $\text{success} := \text{\tt true}$
\State $i :=1$
\While {$i\leq s$}
\myIf  {\parbox[t]{\dimexpr\linewidth-2.5em}{there is a  cycle $C_i$ that:
   \begin{enumerate} \item  has  length $2k_i$ and  contains $e^1_i, e^2_i$ as successive edges  in its positive traversal when we  start from $e_i^1$, \item   is  bichromatic  under the current coloring  \end{enumerate} }} \label{line:cyclecond}  
   \myThen {\parbox[t]{\dimexpr\linewidth-3.5em} {let $C_i$ be the unique such cycle \Comment{the uniqueness is due to the properness of the coloring}\\ 
   Successively assign to each edge of $\sco(e_i^1,C_i)$  a color selected u.a.r. from the set of   colors whose assignment  does not violate 4-acyclicity  \label{val:recolor}\\
   $i:=i+1$
   }}
   
   \myElse {\parbox[t]{\dimexpr\linewidth-3.5em}{$\text{success} := \text{\tt false}$; $i := s+1$  \strut}}   
   \myEndIf\EndWhile
\label{line:s-phase} \end{algorithmic}
\end{algorithm}
We call each iteration of the \textbf{while}-loop of \ValE\
  a {\em phase}. Also, the initial color assignment of line \ref{val:color} is called the {\em initial phase}.
 
Given an admissible sequence  $\mathcal{S}$, we say that an execution of \ValE$(\mathcal{S})$ terminates successfully if it goes through $\mathcal{S}$    without reporting $\text{success} := \text{\tt false}$, in which case it generates a uniquely defined sequence $\C = (C_1, \ldots, C_s)$ of cycles, which for $i=1, \ldots, s$ are bichromatic in the beginning of the corresponding phase, have length $2k_i\geq 6$ and contain $e_i^1, e_i^2$ successively in this order in their positive traversal starting from $e_i^1$. Such a sequence is a partial random variable defined for successful executions of \ValE$(\mathcal{S})$. For linguistic convenience, we call the sequence of cycles $\C$ a {\em success certificate} for the corresponding successful execution of \ValE$(\mathcal{S})$.

We first note that:

\begin{lemma}
\label{lem:randomness}
Let $e$ be an edge and $c$ be a color.  {\em At the time of a color assignment to $e$} the probability that the assigned color is $c$ is at most $$\frac{1}{\gamma(\Delta - 1)+1},$$ conditional on the colors of any assignments to any edges made before the said color assignment to $e$.
\end{lemma}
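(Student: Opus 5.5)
The bound comes from counting how many colors remain available whenever a color is assigned, and then using uniformity of the choice. Every color assignment in \EC, \Recolor\ and \ValE\ picks a color u.a.r. from the set $A$ of colors whose assignment does not violate 4-acyclicity. That set is computed from the current coloring, i.e.\ from the outcomes of all earlier assignments. So, conditional on any history of earlier color choices, the set $A$ is determined, and the new color is uniform on $A$. The conditional probability that it equals a given $c$ is therefore $1/|A|$ if $c\in A$ and $0$ otherwise. It suffices to show $|A|\geq \gamma(\Delta-1)+1$ deterministically, for every possible current coloring.

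For this lower bound I would invoke Lemma \ref{lem:sufficientcolors}. Whatever colors are currently on the other edges, the number of colors that must be avoided when coloring $e$ to keep properness and 4-acyclicity is at most $2(\Delta-1)$. The palette has at least $\K$ colors, so
\[
|A| \;\geq\; (2+\gamma)(\Delta-1)+1-2(\Delta-1) \;=\; \gamma(\Delta-1)+1 .
\]
In particular $A$ is nonempty, so the assignment is well defined. Combining this with the uniform choice gives the claimed bound $1/(\gamma(\Delta-1)+1)$.

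The one point needing care is which colors are forbidden during recoloring, when some edges are partially recolored or still carry old colors. Lemma \ref{lem:sufficientcolors} is stated for any step of any successive coloring. Its counting argument only looks at the colors currently present on edges adjacent to $e$, and on the fourth edges of 4-cycles through $e$. That count stays at most $2(\Delta-1)$ no matter how those colors arose, including when the current coloring is only partial. So the bound holds at every assignment in line \ref{val:color} and line \ref{val:recolor} of \ValE, and equally in \EC. Since the conditioning history is arbitrary, the bound holds conditional on the colors of any earlier assignments, as stated.
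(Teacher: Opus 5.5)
Your proposal is correct and matches the paper's proof: the paper likewise combines the palette size of at least $(2+\gamma)(\Delta-1)+1$ with Lemma~\ref{lem:sufficientcolors}, so that at least $\gamma(\Delta-1)+1$ colors remain available for the uniform choice, whatever the earlier assignments were. Your added remark makes explicit a point the paper leaves implicit: the Esperet--Parreau count depends only on the current (proper, 4-acyclic) coloring, so it also applies during recoloring and under arbitrary conditioning on the history.
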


\begin{proof} The claim, even under the condition of previous color assignments, is a corollary of the fact that $\K$ colors are available and Lemma \ref{lem:sufficientcolors}, according to which $2(\Delta-1)$ colors should be avoided at every step to retain 4-acyclicity.  
\end{proof}

The lemma that follows was originally proved in Giotis et al. \cite[Lemma 5]{giotis2017acyclic}. The  proof  was also presented in Fialho et al. \cite[Appendix]{fialho2020new}. For completeness, we give it here as well.
\begin{lemma}\label{lem:boundadmissible}
Given an admissible  sequence ${\mathcal S}= (e^1_i, e^2_i, k_i), i=1, \ldots, s,$ we have that:    
\begin{multline}\label{eq:bound} \Pr\left[ \text{\ValE}  \text{\rm{ is successful on }} {\mathcal S}\right] \leq \\
 \left(\frac{1}{\gamma(\Delta-1)+1}\right)^s \prod_{i=1}^s\left(1 - \left(1 -\frac{1}{\gamma(\Delta-1)+1}\right)^{\Delta-1}\right)^{2k_i-3}. \nonumber\end{multline}		
\end{lemma}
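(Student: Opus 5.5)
We write the success probability as a product of conditional probabilities, one per phase, and bound each factor separately. Let $p=\frac{1}{\gamma(\Delta-1)+1}$. For $i=1,\ldots,s$, let $A_i$ be the event that at the start of phase $i$ the condition of line \ref{line:cyclecond} holds. Then
\[
\Pr\left[\text{\ValE is successful on } {\mathcal S}\right]=\prod_{i=1}^s \Pr[A_i \mid A_1,\ldots,A_{i-1}],
\]
so it suffices to show $\Pr[A_i\mid A_1,\ldots,A_{i-1}]\le p\,\bigl(1-(1-p)^{\Delta-1}\bigr)^{2k_i-3}$, conditional also on the whole history of color assignments so far.

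Fix phase $i$ and the history before it. We do not want to condition on the current colors, since they are already fixed. Instead we look at the \emph{last} assignment made to each edge, and we reveal edges in the order of their last assignment times.

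The cycle $C_i$ is a path determined step by step. Start from $e_i^1,e_i^2$, whose shared vertex is fixed. At the far endpoint of the current last edge, the next edge of $C_i$ must carry the color of the edge two steps back. By properness there is at most one such edge, so $C_i$ is uniquely determined edge by edge, and the constraint is closed by $e_i^1$ at the end.

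The $2k_i$ edges of $C_i$ impose $2k_i-2$ color-equality constraints: each edge must be homochromatic with the edge two positions away, and only the colors of two consecutive edges are free. One of these constraints we bound directly, by Lemma~\ref{lem:randomness}. Namely, consider the edge of $C_i$ with the latest last-assignment among those whose partner is determined earlier. When it was assigned, it had to receive one specific color, which has probability at most $p$. The fully determined case, in which $e_i^1,e_i^2$ fix the structure, gives the factor $p$.

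For each of the remaining $2k_i-3$ constraints, the bound is weaker because the next edge is not known in advance. The requirement is only that \emph{some} one of the at most $\Delta-1$ edges incident to the current endpoint received the required color at its last assignment. By Lemma~\ref{lem:randomness} applied conditionally to each of these edges, the probability that none gets that color is at least $(1-p)^{\Delta-1}$, so the probability that some does is at most $1-(1-p)^{\Delta-1}$. Chaining these bounds through the revelation order, each time conditioning on previously revealed assignments, gives the product.

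The main obstacle is making this chaining rigorous. The edges of $C_i$ are not assigned in cycle order, and which edge plays which role depends on the random outcome. The approach is to reveal the last assignments in chronological order and to apply Lemma~\ref{lem:randomness} to each candidate edge at the moment it is assigned, treating the color it must match as already fixed; this is why exactly one of the $2k_i-2$ constraints is charged $p$ rather than $1-(1-p)^{\Delta-1}$. Getting the exponent $2k_i-3$ exactly, rather than $2k_i-2$, depends on this careful choice of the edge assigned last.
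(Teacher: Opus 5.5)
There is a genuine gap, and it is exactly the step you flag as the main obstacle.

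\textbf{The per-phase factorization does not work.} You reduce to bounds $\Pr[A_i\mid A_1,\ldots,A_{i-1}]\le p\,\bigl(1-(1-p)^{\Delta-1}\bigr)^{2k_i-3}$, but Lemma~\ref{lem:randomness} cannot give these.
\begin{itemize}
\item If you condition ``also on the whole history of color assignments so far'', the colors at the start of phase $i$ are fixed. Then $A_i$ is deterministic, and that version of the claim is false.
\item If you condition only on $A_1,\ldots,A_{i-1}$, a different problem appears. The edges of $C_i$ typically received their current colors long before phase $i$, in the initial phase or some early phase. The events $A_1,\ldots,A_{i-1}$ depend on those same assignments and on later ones. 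For example, the two edges of $C_{i-1}$ outside $\sco(e^1_{i-1},C_{i-1})$ keep colors that $A_{i-1}$ forces to coincide with colors of other edges of $C_{i-1}$, and $C_i$ may pass through them.
\end{itemize}
Lemma~\ref{lem:randomness} controls an assignment only conditionally on assignments made \emph{before} it, so it gives no handle on these conditional probabilities.

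Your idea of revealing last assignments in chronological order is the right one. However, that order interleaves edges of different cycles, so it does not fit a phase-by-phase product. The paper never factors by phases. It takes the conjunction of all events $E_i^j$ ($1\le i\le s$, $3\le j\le 2k_i$), where $E_i^j$ says that the last color $e_i^j$ receives before phase $i$ equals the color of the edge among $e_i^1,e_i^2$ with the same parity. It orders all these events globally by the time of the corresponding assignments. It then bounds the conjunction by a product whose factors are each conditioned only on earlier assignments, and these also predate the assignments to every candidate for $e_i^j$.

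\textbf{The source of the factor $p$ is misidentified.} The single factor $p$ per phase does not come from a chronological choice such as ``the edge with the latest last-assignment among those whose partner is determined earlier''. The order of assignments plays no role here. That edge is itself random, so charging it $p$ requires it to have a unique candidate, which your rule does not guarantee; you then have no justification for the exponent $2k_i-3$ rather than $2k_i-2$.

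The source is structural. Traverse $C_i$ starting from $e_i^1,e_i^2$:
\begin{itemize}
\item Each of $e_i^3,\ldots,e_i^{2k_i-1}$ has at most $\Delta-1$ candidates once its predecessors are fixed. This gives the $2k_i-3$ factors $1-(1-p)^{\Delta-1}$; your candidate argument for these matches the paper's.
\item The closing edge $e_i^{2k_i}$ has at most one candidate: the edge joining the current endpoint to the endpoint of $e_i^1$ not shared with $e_i^2$. It therefore contributes $p$.
\end{itemize}
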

\begin{proof} Given $\mathcal{S}= (e^1_i, e^2_i, k_i), i=1, \ldots, s$, we will compute the probability of existence of a success certificate for \ValE$(\mathcal{S})$.  Let such a certificate be $\C =(C_1, \ldots, C_s)$ and let $C_i$ be comprised of the successive edges $e_i^j, j =1, \ldots, 2k_i$ in its positive traversal starting from $e_i^1$. Note that $e_i^1, e_i^2$ are fixed and part of the input $\mathcal{S}$, whereas $e_i^j, j =3, \ldots, 2k_i$ are partial random variables that depend on the execution (they are uniquely defined for each successful execution). Let also $c_i^1, c_i^2$ be the colors of $e_i^1, e_i^2$ respectively (random variables as well).

Let $E_i^j, i=1, \ldots s, ,j=3, \ldots, 2k_i$ be the event that 
the color that $e_i^j$ has at  beginning the  of the $i$-th phase is $c_i^{(j+1) \bmod 2 +1}$ (the color of that one among the two $e_i^1, e_i^2$ which has the same parity as $e_i^j$). Then, 
by ordering the events by the time the corresponding assignments are made, in order to bound the conjunction 
$$\bigwedge_{\substack{i=1, \ldots, s\\ j=3, \ldots, 2k_i}	}E_i^j$$
it 
suffices to bound the probability of the product:
\begin{equation}\label{eq:hat}
\prod_{\substack{i=1, \ldots, s\\ j=3, \ldots, 2k_i}	}\Pr[E_i^j], \end{equation}
 The factors $ \Pr[E_i^j]$ in the product above are probabilities that are conditional only on color assignments that predate the assignments to $e_i^j$ of the colors they have at the beginning of the $i$-th phase. Therefore, we may consider $E_i^j$ as the event that the last color that $e_i^j$ gets before the start of the $i$-th phase is $c_i^{(j+1) \bmod 2 +1}$ (the color of that one among the two $e_i^1, e_i^2$ which has the same parity as $e_i^j$). If the edges $e_i^j$ were fixed, then by  Lemma \ref{lem:randomness}, we would have that $$  \Pr[E_i^j]\leq \frac{1}{\gamma(\Delta - 1)+1}. $$
  However, $e_i^j, i=1, \ldots, s, j=3, \ldots,  2k_i$ are random, so to bound  the probabilities   $\Pr[E_i^j]$,  i.e. to bound the probability of the existence of edges  $e_i^j$ with the right colors, we have to take into account all possibilities for the edges $e_i^j$. 
  
    Observe first that if $C =(v_1, e_1, v_2, e_2, v_3, e_3,  \ldots, v_k, e_k, v_1)$ 
  denotes an arbitrary  cycle with the vertices $v_i$ and the edges  $e_i$ 
  in the order of the cycle's  positive traversal starting from $e_1$, and if $v_1, e_1, v_2, e_2, v_3$ are fixed,  then $e_3$ has at most $\Delta-1$ 
  possibilities. In general  if  $$v_1, e_1, \ldots, v_{i-1},  e_{i-1}, v_i$$
  are fixed, then $e_i$ has at most $\Delta -1$ possibilities  if $2 < i < k$, whereas it    has just one possibility if $i=k$ 
  and $v_1, e_1, \ldots, e_{k-1},  v_k$ are fixed.  
  
  Having the above in mind,  we take into account  all possibilities for the edges $e_i^j$ successively for $i=1, \ldots, s$ and for each $i$, by traversing the edges $e_i^j, j=3, \ldots, 2k_i$ in the cycle's positive traversal and by fixing them, each in its turn. 
  Then   observe that  
  since at each step of the traversal of $C$, the possible edges that  may get the ``right" color are at most $\Delta -1$, the probability of at least one (and therefore, by properness, the  probability of exactly one)  getting this right color is at most \begin{equation*} 1- \left(1 -\frac{1}{\gamma(\Delta-1)+1}\right)^{\Delta-1}.\end{equation*}
  Indeed the probability that none gets the right color is at least $$ \left(1 -\frac{1}{\gamma(\Delta-1)+1}\right)^{\Delta-1}.$$  
  To get the latter lower bound,  instead of the probability of a conjunction, we considered the product of the probabilities of the conjuncts  because  we assumed that the events are in chronological order. Also observe that  the conditionals of $\Pr[E_i^j]$, which as we noted predate  
  the assignment to $e_i^j$ of the color it has at the beginning of phase $i$, also predate the color-assignment to each ``possibility" for $e_i^j$ we consider, because the ``possibilities" are candidates to be identified with $e_i^j$.

  The required follows because the factors $$\left(1 - \left(1 -\frac{1}{\gamma(\Delta-1)+1}\right)^{\Delta-1}\right)^{2k_i-3}, i=1, \ldots, s$$ cover all possibilities of the edges of all cycles but their last edge, whereas the single possibility for their last edge is covered by the factor $\left(\frac{1}{\gamma(\Delta-1)+1}\right)^s.$ 
 \end{proof}

  \begin{corollary}\label{cor:boundadmissible}
Given an admissible  sequence ${\mathcal S}= (e^1_i, e^2_i, k_i), i=1, \ldots, s,$ we have:    
\begin{equation*}\label{eq:bound} \Pr\left[ \text{\ValE}  \text{\rm{ is successful on }} {\mathcal S}\right] 
\leq   \left(\frac{1}{\Delta-1}\right)^s  \prod_{i=1}^s \left(\frac{1}{\gamma}\left(1 -  e^{-\frac{1}{\gamma}} \right)^{2k_i-3}\right).
\end{equation*}
\end{corollary}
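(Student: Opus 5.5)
This follows from Lemma \ref{lem:boundadmissible} by two elementary estimates, one for each kind of factor in its bound. Set $x = \gamma(\Delta-1)$, so that the per-assignment bound of Lemma \ref{lem:randomness} is $\frac{1}{x+1}$. We compare the $i$-th factor of the product in Lemma \ref{lem:boundadmissible} with the corresponding factor in the corollary.

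For the first factor, note that
\[
\frac{1}{\gamma(\Delta-1)+1} \leq \frac{1}{\gamma(\Delta-1)} = \frac{1}{\Delta-1}\cdot\frac{1}{\gamma}.
\]
Taking the $s$-th power gives $\left(\frac{1}{\Delta-1}\right)^s\prod_{i=1}^s \frac{1}{\gamma}$. This produces the $\frac{1}{\Delta-1}$ factor outside the product and one $\frac{1}{\gamma}$ inside each factor of the product.

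For the second factor, we need
\[
1-\left(1-\frac{1}{x+1}\right)^{\Delta-1} \leq 1-e^{-\frac{1}{\gamma}},
\]
which is equivalent to $\left(1-\frac{1}{x+1}\right)^{\Delta-1}\geq e^{-1/\gamma}$. Rewrite the left side as $\left(\frac{x}{x+1}\right)^{\Delta-1} = \left(1+\frac{1}{x}\right)^{-(\Delta-1)}$. The standard inequality $1+\frac1x\leq e^{1/x}$ then gives
\[
\left(1+\frac1x\right)^{-(\Delta-1)} \geq e^{-(\Delta-1)/x} = e^{-1/\gamma}.
\]
Both sides of the inequality we need are nonnegative, since the base lies in $[0,1]$. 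Raising to the nonnegative power $2k_i-3$ preserves the direction of the inequality, so each factor of the product in Lemma \ref{lem:boundadmissible} is at most $\left(1-e^{-1/\gamma}\right)^{2k_i-3}$.

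Multiplying the two estimates over $i=1,\ldots,s$, using that all quantities are nonnegative, yields the claimed bound. There is no real obstacle here. The only point needing care is choosing the exact form $\left(\frac{x}{x+1}\right)^{\Delta-1}=(1+1/x)^{-(\Delta-1)}$. If one instead used $1-y\geq e^{-y/(1-y)}$ directly, the exponent $\frac{\Delta-1}{x}$ would not come out cleanly as $\frac{1}{\gamma}$.
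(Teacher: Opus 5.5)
Your argument is correct and is essentially the paper's proof. The paper applies $1-y>e^{-y/(1-y)}$ with $y=\frac{1}{\gamma(\Delta-1)+1}$, which is exactly your inequality $1+\frac{1}{x}\leq e^{1/x}$ after substituting $y=\frac{1}{x+1}$; the paper leaves implicit your bound $\frac{1}{\gamma(\Delta-1)+1}\leq\frac{1}{\gamma(\Delta-1)}$ for the first factor. Your closing remark is therefore mistaken: with that $y$ one has $\frac{y}{1-y}=\frac{1}{\gamma(\Delta-1)}$, so the exponent comes out as exactly $\frac{1}{\gamma}$.
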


\begin{proof} 
Use the  the inequality $$1-x>e^{-\frac{x}{1-x}}, \forall x<1, x \neq 0,$$ 
for $x= \frac{1}{\gamma(\Delta -1)+1}$
(see  also \cite[Corollary 2]{giotis2017acyclic}).
 \end{proof}
 
 In the sequel, a marked forest is a forest whose internal edges are marked with pairs $(e, C)$ where $C$ is a cycle that contain the edge $e$. We assume  that a marked forest has all the properties of a witness forest, i.e. that the cycle-marks have even length $\geq 6$, that the outdegrees of all internal nodes are  equal to the length of their cycle-mark minus 2, that the roots are internal nodes and that the items \ref{ite:fir} -- \ref{ite:thi} of Corollary~\ref{cor:witnessProp} are satisfied, lest the probability of the marked forest being a witness forest is zero.
 
Given a marked forest $\mathcal{F}$ whose  sequence of marks (in the depth-first ordering of its internal nodes)  is $(e_i, C_i), i=1, \ldots,s$ {\em the corresponding admissible sequence} $\mathcal{S}_\mathcal{F}$ is obtained by letting  $e_i^1$ be  $e_i$, $e_i^2$ be the edge of $C_i$  following $e_i$ in $C_i$'s positive traversal and $k_i$ be the half-length of $C_i$, $i=1, \ldots, s$. Given an admissible sequence $\mathcal{S}$ let $\frak{F}_\mathcal{S}$ be the {\em class} of all marked  forests $\mathcal{F}$ as above  such that $\mathcal{S}_\mathcal{F} = \mathcal{S}.$

Giotis et al.  prove  the following fact (see\cite[Proposition 1]{giotis2017acyclic}); for completeness the proof is also included here.

\begin{lemma}\label{lem:proposition}  
Given an admissible sequence ${\mathcal S},$ we have:

\begin{multline}\label{eq:disjoint1}
\sum_{{\mathcal F} \in {\frak F}_{\mathcal S}} \Pr[ \text{ \EC\ executes with  witness forest } {\mathcal F}] \leq \\
	\Pr[\text{\ValE\ is successful on input } {\mathcal S}].\end{multline}
\end{lemma}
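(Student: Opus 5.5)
The plan is a coupling argument. Both \EC\ and \ValE$(\mathcal{S})$ are driven by the same source of randomness, so the events ``\EC\ has witness forest $\mathcal{F}$'' for $\mathcal{F}\in\frak{F}_\mathcal{S}$ can be shown to be pairwise disjoint and all contained in the event that \ValE$(\mathcal{S})$ succeeds.

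\emph{Common randomness.} Fix in advance, for every edge $f$ and every $t\geq 1$, independent uniform random reals (or random bits) $\rho_{f,t}$. The $t$-th color assignment ever made to $f$, in either algorithm, uses $\rho_{f,t}$ to select u.a.r. from the currently allowed set of colors, i.e.\ those not violating 4-acyclicity. Under this convention each algorithm, run on the common sample space, has exactly the distribution described in its pseudocode. The resulting coloring depends only on the past history and the current index, so the two algorithms make identical choices whenever they are in the same state.

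\emph{Simulation.} Fix $\mathcal{F}\in\frak{F}_\mathcal{S}$ and an outcome $\omega$ on which \EC\ has witness forest $\mathcal{F}$. The calls of \Recolor\ in \EC\ occur in the depth-first order of the internal nodes of $\mathcal{F}$; this holds because of the recursive structure and because roots are generated by line~\ref{ec:recolor} in order. So the $i$-th call is \Recolor$(e_i,C_i)$, where $(e_i,C_i)$ is the $i$-th mark. Between the initial coloring and the $i$-th call, \EC\ has recolored exactly $\sco(e_1,C_1),\ldots,\sco(e_{i-1},C_{i-1})$, each edge in the predetermined order. I will prove by induction on $i$ that at the start of phase $i$ of \ValE$(\mathcal{S})$ on $\omega$ the coloring coincides with the coloring of \EC\ just before its $i$-th \Recolor\ call. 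The base case is the initial phase, which is identical in both algorithms and uses the same $\rho_{f,1}$.

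For the inductive step, $C_i$ is bichromatic at that moment, since \EC\ selected it as such. Its length is $2k_i$, and it contains $e_i^1=e_i$ followed by $e_i^2$ in its positive traversal. So the test in line~\ref{line:cyclecond} passes. The cycle found there is unique by properness, so \ValE\ picks $C_i$ itself. It then recolors $\sco(e_i^1,C_i)$ in the same order and with the same counters $t$ per edge, because the number of previous assignments to each edge agrees. Hence the colorings agree again at the next step. Consequently \ValE$(\mathcal{S})$ is successful on $\omega$.

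\emph{Disjointness.} The witness forest is a function of $\omega$, so distinct $\mathcal{F}$ give disjoint events. Summing probabilities then yields the inequality~\eqref{eq:disjoint1}.

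The main obstacle is making rigorous that the order of recoloring and the per-edge assignment counters match exactly. In particular, the \textbf{while}-loop checks in \EC\ add no color assignments, and the coloring \ValE\ sees depends only on the sequence of scopes recolored, not on why they were chosen. I would state the common-randomness convention explicitly as a lemma, to avoid subtlety about ``u.a.r. from a state-dependent set.''
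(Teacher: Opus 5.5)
Your proposal is correct and follows the same route as the paper: you run \EC\ and \ValE$(\mathcal{S})$ on identical random choices, so every outcome on which \EC\ produces some $\mathcal{F}\in\mathfrak{F}_{\mathcal{S}}$ is one on which \ValE$(\mathcal{S})$ succeeds, and the events for distinct $\mathcal{F}$ are disjoint because the witness forest is a function of the outcome. Your per-edge random table and induction on phases make rigorous what the paper states in one line. One step is left implicit by both you and the paper: if the depth-first order defining $\mathcal{S}_{\mathcal{F}}$ orders siblings by edge-mark, you need that the \Recolor\ calls occur in that order, and this follows from the Progress Lemma, since each later sibling (or root) call has a strictly later edge-mark.
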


\begin{proof} We prove first that the probability  $P$   for at least one ${\mathcal F} \in {\frak F}_{\mathcal S}$ being a witness forest of \EC\ is bounded from above by the probability that \ValE\ is successful on input  ${\mathcal S}$. 
For this it is sufficient to notice that if the random choices made by an execution of \EC\ that produces an arbitrary  ${\mathcal F} \in {\frak F}_{\mathcal S}$ 
are made by \ValE\ on input ${\mathcal S}_{\mathcal F} = {\mathcal S}$, 
then \ValE\ is successful. The result now follows by observing the  events that 
${\mathcal F}$ is a witness forest of \EC\ for various 
${\mathcal F} \in {\frak F}_{\mathcal S}$ are mutually exclusive, therefore $P$ can be written as the sum in the lhs of the Inequality \eqref{eq:disjoint1}.\end{proof}

So by Lemma \ref{lem:proposition} and  Corollary \ref{cor:boundadmissible} we have:

\begin{corollary}\label{cor:boundadmissible2}
Given an admissible  sequence ${\mathcal S}= (e^1_i, e^2_i, k_i), i=1, \ldots, s,$ we have:

\begin{multline}\label{eq:disjoint2}
\sum_{{\mathcal F} \in {\frak F}_{\mathcal S}} \Pr[ \text{ \EC\ executes with  witness forest } {\mathcal F}] \leq \\
\left(\frac{1}{\Delta-1}\right)^s  \prod_{i=1}^s \left(\frac{1}{\gamma}\left(1 -  e^{-\frac{1}{\gamma}} \right)^{2k_i-3}\right).\end{multline}\end{corollary}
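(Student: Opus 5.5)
This is a direct chaining of the two results just established, so I would not introduce any new machinery. Fix an admissible sequence $\mathcal{S}= (e^1_i, e^2_i, k_i)$, $i=1, \ldots, s$. By Lemma \ref{lem:proposition}, the left-hand side of \eqref{eq:disjoint2}, namely $\sum_{\mathcal{F} \in \frak{F}_{\mathcal{S}}} \Pr[\text{\EC\ executes with witness forest } \mathcal{F}]$, is bounded above by $\Pr[\text{\ValE\ is successful on input } \mathcal{S}]$. Here the mutual exclusivity of the events ``$\mathcal{F}$ is the witness forest'' for distinct $\mathcal{F}$ is already built into that lemma. Corollary \ref{cor:boundadmissible} then bounds the latter probability by exactly the right-hand side of \eqref{eq:disjoint2}, and transitivity of $\leq$ finishes the argument.

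The only point worth checking is that the two earlier results use the same hypotheses. Lemma \ref{lem:proposition} needs $\mathcal{S}$ to be admissible. Corollary \ref{cor:boundadmissible} needs admissibility together with the standing assumption that at least $\K$ colors are available; this assumption underlies Lemma \ref{lem:randomness} and, through it, Lemma \ref{lem:boundadmissible}. Both hypotheses are in force, so nothing further is required.

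There is no real obstacle. If I were to look for a subtle spot, it would be the passage in Corollary \ref{cor:boundadmissible} from $\left(\frac{1}{\gamma(\Delta-1)+1}\right)^s$ to $\left(\frac{1}{\gamma(\Delta-1)}\right)^s$, together with the estimate $1-(1-x)^{\Delta-1} \leq 1-e^{-1/\gamma}$. Both rest on the inequality $1-x>e^{-x/(1-x)}$ with $x/(1-x) = \frac{1}{\gamma(\Delta-1)}$. These steps are taken as given from the corollary, however, so the present statement follows immediately.
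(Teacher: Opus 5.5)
Your proposal is correct and is the paper's own argument: the paper derives this corollary in one line by chaining Lemma \ref{lem:proposition} with Corollary \ref{cor:boundadmissible}, as you do. One small inaccuracy in your aside: the passage from $\frac{1}{\gamma(\Delta-1)+1}$ to $\frac{1}{\gamma(\Delta-1)}$ follows from plain monotonicity and does not need the inequality $1-x>e^{-x/(1-x)}$, which is only needed for the bound $1-(1-x)^{\Delta-1}\le 1-e^{-1/\gamma}$ with $x=\frac{1}{\gamma(\Delta-1)+1}$.
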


In the sequel,  if $\F$ is a forest $|\F|$ denotes its number of nodes (leaves included), which is often also  denoted by $n$, and $|\F|_{\mathrm{in}}$ denotes the number of its internal nodes. 
If $\C$ is a sequence of cycles, $|\C|$ denotes  the length of this sequence,  and if $C$ is a cycle $|C|$ denotes the number of edges in $C$.

\begin{lemma}\label{lem:finalprob}
Given an {\rm unmarked}  forest $\F$  with   outdegrees  of its  internal nodes  $2k_i, i=1,\ldots,|\F|_{\mathrm{in}}$, let $V_{\F}$ be the event that $\F$ can be marked in a way that it is a witness forest, then 
\begin{equation} \label{eq:VF} \Pr[V_{\F}] = {\rm O} \Bigg( 
\prod_{i=1}^{|\F|_{\rm in}}\frac{1}{\gamma}
\left(1-e^{-1/\gamma}\right)^{2k_i -3} 
\Bigg) 
= {\rm O} \Bigg(\left(1-e^{-1/\gamma}\right)^{(3/4)|\F|}\Bigg). \end{equation}

\end{lemma}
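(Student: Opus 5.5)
The plan is to bound $\Pr[V_\F]$ by a union bound over all markings of $\F$, group the markings by their admissible sequence, and then use Corollary~\ref{cor:boundadmissible2}. The second equality in \eqref{eq:VF} will follow from a short counting argument on the tree shape.

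\emph{Step 1: reduce to a sum over admissible sequences.} $V_\F$ occurs exactly when some valid marking of $\F$ is the witness forest of \EC. The witness forest of a given execution is unique, so the events ``$\F$ with this marking is the witness forest'' are mutually exclusive over different markings. Every valid marking $\mathcal{M}$ determines an admissible sequence $\mathcal{S}_{\F,\mathcal{M}}$ whose $k_i$ are half the outdegrees plus one. Summing first over the markings that induce a fixed $\mathcal{S}$ (these lie in $\frak{F}_\mathcal{S}$) and then over $\mathcal{S}$, Corollary~\ref{cor:boundadmissible2} gives
\[
\Pr[V_\F]\le \sum_{\mathcal{S}} \left(\frac{1}{\Delta-1}\right)^{s}\prod_{i=1}^{s}\frac{1}{\gamma}\left(1-e^{-1/\gamma}\right)^{2k_i-3},
\]
where $s=|\F|_{\rm in}$ and the $k_i$ are fixed by $\F$. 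The summand therefore does not depend on $\mathcal{S}$, and it remains to count the admissible sequences compatible with $\F$.

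\emph{Step 2: count the compatible sequences.} This is the main obstacle, because the factor $(\Delta-1)^{-s}$ has to absorb the count. Corollary~\ref{cor:witnessProp} supplies the constraints.
\begin{itemize}
\item[(i)] The roots: their edge-marks are pairwise distinct edges of $G$. Since the number of roots is at most $m$, a constant, the choices of root edge-marks contribute only an $O(1)$ factor.
\item[(ii)] Non-root nodes: the edge-mark of a child lies in the scope of its parent. If this is encoded by the position of the edge within the parent's scope, the position is fixed by the child's slot in $\F$ up to a bounded ambiguity. (Here the unordered nature of $\F$ needs care: I would fix a canonical ordering of children so that the child edge-marks correspond injectively to scope positions, using the distinctness in item~\ref{ite:sec}.)
\item[(iii)] The second edge $e_i^2$: for every node it is one of at most $\Delta-1$ edges adjacent to $e_i^1$ at its head in the positive traversal.
\end{itemize}
Altogether there are at most $O(1)\cdot(\Delta-1)^{s}$ compatible sequences, and this cancels $(\Delta-1)^{-s}$, giving the first equality in \eqref{eq:VF}.

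\emph{Step 3: the second equality.} Each internal node has outdegree $2k_i\ge4$. Using $1/\gamma\le1$ (since $\gamma\ge1$), it suffices to show $\sum_i(2k_i-3)\ge\frac34|\F|$. Every node except the roots is a child of some internal node, so $|\F|=\sum_i 2k_i+r$, where $r=O(1)$ is the number of roots. Because $2k_i\ge4$, we have $2k_i-3\ge\frac14\cdot2k_i$. This only yields an exponent of $\frac14|\F|$, so I need a sharper estimate.

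For the sharper estimate, look at the inequality $2k_i-3\ge\frac34(2k_i+1)$, which counts the node itself together with its children. It is equivalent to $\frac12 k_i\ge\frac{15}{4}$, i.e.\ $k_i\ge7.5$, so it fails for small cycles. So I would instead account per node over the whole tree: since $\F$ has $|\F|=1+\sum_i 2k_i$ nodes per tree, I want $\sum_i(2k_i-3)\ge\frac34\sum_i 2k_i-O(1)$, i.e.\ $\frac12 k_i\ge3$, i.e.\ $k_i\ge6$.

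This also fails for $k_i=3$, which suggests the intended exponent comes from a different comparison. I would try to recheck it by using $\frac1\gamma\le 1-e^{-1/\gamma}$ relative to the base, together with $1-e^{-1/\gamma}<1$. Then each node contributes $(1-e^{-1/\gamma})^{2k_i-2}$, and $2k_i-2\ge\frac34\cdot2k_i$ holds iff $k_i\ge4$. The remaining case $k_i=3$ (degree-4 nodes) has to be handled by this refinement. If it still fails for $k_i=3$, I would accept a weaker constant exponent, since any exponent linear in $|\F|$ suffices for the later analysis.
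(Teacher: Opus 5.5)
Your Step~3 does not prove the second estimate in \eqref{eq:VF}; you end by proposing a weaker exponent. The cause is a change of convention in mid-proof. In Step~1 you correctly take the $k_i$ of Corollary~\ref{cor:boundadmissible2} to be the half-length of the cycle-mark (half the outdegree plus one). So node $i$ has $2k_i-2\ge 4$ children. The statement's wording ``outdegrees $2k_i$'' is loose; the paper's proof works with $2k_i-2$ children. In Step~3 you switch to outdegree $2k_i$ but keep the exponent $2k_i-3$, and that is why every comparison you try fails.

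With the consistent convention the paper's argument is one line. Split the exponent $2k_i-3$ of node $i$ equally among its $2k_i-2$ children. Each share is $1-\frac{1}{2k_i-2}\ge\frac34$ because $k_i\ge 3$, and every non-root node receives exactly one share. Hence $\sum_i(2k_i-3)\ge\frac34(|\F|-r)$ with $r\le m$ roots, and the factors $1/\gamma\le 1$ are dropped.

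Your fallback fails too. The inequality $\frac1\gamma\le 1-e^{-1/\gamma}$ is false, since $1-e^{-x}<x$ for $x>0$. A weaker exponent is also not harmless: the proof of Lemma~\ref{factt1} needs precisely $(1-e^{-1/\gamma})^{3/4}<0.6677$ to reach $\gamma\ge 1.142$. With exponent $1/4$ you would need roughly $\gamma>4.5$.

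Step~2 has a separate gap, at exactly the point you flagged. A ``bounded ambiguity'' $c>1$ at every internal node contributes $c^{s}$, not $O(1)$. That factor is not cancelled by $(\Delta-1)^{-s}$, and it changes the exponential rate that Lemma~\ref{lem:boundtrees} is compared against. The paper's proof needs, and simply asserts, that there is no ambiguity at all: once a node's cycle-mark is fixed, its children's edge-marks are fixed. The only freedom is then $O(1)$ for root edge-marks and $\Delta-1$ per second edge (your (i) and (iii)).

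A canonical ordering of children by shape does not supply this. For an unordered forest, the parent's cycle-mark fixes only the set $\sco(e,C)$ from which the children's edge-marks are drawn, not which child gets which edge. For example, take a root with a $6$-cycle mark whose children are one internal node and three leaves. The internal child may carry any of the four scope edges, and each choice satisfies Corollary~\ref{cor:witnessProp} and gives a distinct admissible sequence. This placement information is exactly what a plane embedding records. The paper's one-line assertion does not address it either.

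Moreover, $\mathcal S$ lists absolute edges, while the third and later scope edges depend on the whole parent cycle, which $\mathcal S$ does not record. So counting positions instead of edges would require a validation algorithm that reads each child's edge off the parent's cycle found at run time, not \ValE\ as defined. As written, Step~2 does not justify $O(1)\cdot(\Delta-1)^{s}$: the per-node placement freedom must be ruled out or charged to the tree count.
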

\begin{proof} Fix the given unmarked forest. We first assume that it is a tree.

We start by proving the first equality. 
  The length of the cycle-mark of any internal node diminished by two is equal to the outdegree of the node, which is fixed.

The number of possible edge-marks of its root is a constant, since the number of edges is considered a constant.  Once the edge-mark of the root is fixed, the number of possible next edges in the cycle-marks' positive  traversal is at most $\Delta-1$, because the next edge in the positive traversal may start from exactly one of the endpoints of the edge-mark. 

Now given  the cycle-mark of an internal node, the edge-marks of its children become fixed, and again  for each such edge-mark of a child, there are at most $\Delta-1$ possibilities for the next edge in the positive traversal of the children's  cycle-mark.

Thus altogether there are at most $(\Delta-1)^{|\F|_{\rm in}}$ possible admissible sequences $\mathcal{S}$ so that the marked forest $\F$ obtained after marking the given unmarked forest satisfies $\F = \F_{\mathcal {S}}.$ The first equality now follows from Corollary \ref{cor:boundadmissible2}.

As for the second equality, distribute the numbers $\left(1-e^{-1/\gamma}\right)^{2k_i -3 }$, associated with each 
internal node of $\F$,  equally to each of its $2k_i-2$  children, to get that all nodes $u$ of $\F$ except the root are now associated with a   number at most 
$\left(1-e^{-1/\gamma}\right)^{\frac{2k_i -3}{2k_i-2} }$ (recall that $\gamma \geq 1$). But $\frac{2k_i -3}{2k_i-2} = 1 -\frac{1}{2k_i-2}$ is at least $3/4$, because  $k_i$ is at least 3.

When $\F$ is not a tree but a forest, work with each tree separately.
\end{proof}


\begin{lemma}\label{lem:boundtrees}
If $t_n$ is the number of  {\rm unordered,  unmarked forests} with $n$ nodes and with a constant number of trees, whose internal nodes have an even outdegree $\geq 4$, then $\limsup_n t_n^{1/n} = \rho ^{-1}$,  for some $\rho \geq 0.6677$.
	\end{lemma}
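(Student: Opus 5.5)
We will set up the ordinary generating function $T(z)=\sum_n T_n z^n$ for rooted, unlabeled, unordered trees in which every internal node has even outdegree $\ge 4$, with $z$ marking all nodes, leaves included. We then read off the exponential growth rate of its coefficients from its dominant singularity, and finally pass from trees to forests with a constant number of trees.

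\emph{The functional equation.} A tree is either a single leaf, or a root together with a multiset of subtrees whose size is even and at least $4$. Using the Pólya exponential, the multisets of even size have generating function
\[
\MSet_{\mathrm{even}}(T)(z)=\tfrac12\Bigl(\exp\bigl(\textstyle\sum_{k\ge1}T(z^k)/k\bigr)+\exp\bigl(\textstyle\sum_{k\ge1}(-1)^kT(z^k)/k\bigr)\Bigr).
\]
Removing the multisets of sizes $0$ and $2$, and then adding back the single leaf, gives
\[
T(z)=z\Bigl(\MSet_{\mathrm{even}}(T)(z)-\tfrac12\bigl(T(z)^2+T(z^2)\bigr)\Bigr).
\]
Here the size-$0$ multiset accounts for the leaf $z$, and the size-$2$ multisets contribute $\tfrac12(T(z)^2+T(z^2))$.

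\emph{Applying the implicit-function schema.} We write this as $T=G(z,T)$. In $G$, only the terms containing $T(z)$ itself are treated as the unknown. All terms $T(z^k)$ with $k\ge2$ are regarded as known functions, analytic in $|z|<\rho^{1/2}$, which strictly contains $|z|\le\rho$ since $\rho<1$. This is the standard reduction used for Pólya-type classes. The coefficients of $G$ are nonnegative and $G$ is nonlinear in $T$. Hence, by the consequence of the smooth implicit-function schema in Flajolet--Sedgewick (Theorem VII.3), $\rho$ is determined by the characteristic system
\[
G(\rho,\tau)=\tau,\qquad \partial_T G(\rho,\tau)=1,
\]
and $T$ has a square-root singularity at $\rho$. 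Two features need care:
\begin{itemize}
\item Every tree has an odd number of nodes, since $n=1+\sum(\text{outdegrees})$ and all outdegrees are even. So $T$ is periodic, and there is a second dominant singularity at $-\rho$.
\item Periodicity only affects the subexponential factor and the restriction of the asymptotics to odd $n$. It does not affect $\limsup_n T_n^{1/n}=\rho^{-1}$.
\end{itemize}

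\emph{Passing to forests.} A forest with a fixed number $c$ of trees is a multiset of $c$ trees. Its generating function is a polynomial in $T(z),T(z^2),\dots,T(z^c)$ whose dominant singularity is again $\rho$. Its coefficients satisfy $t_n\le [z^n]\,T(z)^c$, which has the same exponential growth $\rho^{-n}$, up to polynomial factors. Therefore $\limsup t_n^{1/n}=\rho^{-1}$ as well.

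\emph{Main obstacle: certifying $\rho\ge 0.6677$.} The system involves the infinite Pólya sums, so it cannot be solved in closed form. We plan to proceed as follows.
\begin{enumerate}
\item Compute $T_n$ exactly for $n$ up to some $N$ from the recurrence given by the functional equation.
\item Truncate $\sum_k T(z^k)/k$ at some $k\le K$.
\item Bound the tail terms $T(z^k)$, $k>K$, and the coefficients beyond $N$, using the crude bound $T_n\le$ (the number of ordered trees of the same type). That number has explicitly known growth, so the tail is controlled by a geometric series for $|z|\le 0.6677$, where $z^k$ is tiny.
\item With these monotone upper estimates for $G$, show that at $z=0.6677$ the system $G=\tau$, $G_T=1$ has no solution with $G_T\le 1$. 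Equivalently, show that the fixed-point iteration for $T(0.6677)$ converges to a finite value at which $\partial_T G<1$. This places $0.6677$ strictly inside the disc of convergence, which gives $\rho\ge0.6677$.
\end{enumerate}
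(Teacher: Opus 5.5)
Your proposal is correct and follows essentially the paper's route: the same functional equation for $T(z)$, and the same treatment of $T(z^k)$, $k\ge 2$, as known functions analytic in $|z|<\rho^{1/2}$, so that the smooth implicit-function schema (Flajolet--Sedgewick, Theorem VII.3) applies. The certification of $\rho\ge 0.6677$ is also the same (truncate at order $N$, control the tails by comparison with plane trees, then apply Cauchy--Hadamard), and the differences are minor. The paper bounds the tails with full binary trees $B(z)$, which dominate your plane trees $C(z)$ via $B=C/(1-C^2)$, and it solves the characteristic system of a dominating function $\check{G}$ numerically rather than exhibiting a finite fixed point at $z=0.6677$; your bound $t_n\le[z^n]\,T(z)^c$ spells out the forest-to-tree reduction that the paper only asserts in one line.
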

	
	The proof of the above Lemma is computational and makes use of a conclusion of the ``smooth implicit-function schema" given by Flajolet and Sedgewick in \cite[Theorem VII.3]{Flajolet:2009:AC:1506267}. In order not to interrupt the flow of the proof of our \ref{maintheorem}, we give its proof in an Appendix.

	We now give the proof of Lemma \ref{factt1}, and thus the proof of  our \ref{maintheorem} is concluded.
	
	\begin{proof}[Proof of Lemma \ref{factt1}] It is enough to show that
	  $$\sum_{\substack{{|\F|=n}\\{\F \mbox{ is not marked}}}}\Pr[V_{\F}]$$	
	  is inverse exponetial in $n$.	Because of Lemma \ref{lem:finalprob}, we have:
	
	$$\sum_{\substack{{|\F|=n}\\{\F \mbox{ is not marked}}}}\Pr[V_{\F}] = t_n {\rm O}\Bigg(  \left(1-e^{-1/\gamma}\right)^{(3/4)n} \Bigg)$$
Now just observe that if $\gamma \geq 1.142$	then $\left(1-e^{-1/\gamma}\right)^{3/4} < 0.6677$. Then use Lemma~\ref{lem:boundtrees}.
	\end{proof}

\section*{Acknowledgments}

We are indebted to the anonymous referees who pointed out mistakes in previous versions of this work. 

\section*{Data Availability Statement}
The Maple file for the computations in the Appendix, together with their specifications, are provided  \href{https://drive.google.com/file/d/1ZRTi95dtOXLDpn2KKEApK2Q1sPOTVK5k/view?usp=sharing}{here}.


\begin{thebibliography}{10}

\bibitem{alon1991acyclic}
Noga Alon, Colin McDiarmid, and Bruce Reed.
\newblock Acyclic coloring of graphs.
\newblock {\em Random Structures \& Algorithms}, 2(3):277--288, 1991.

\bibitem{alon2001acyclic}
Noga Alon, Benny Sudakov, and Ayal Zaks.
\newblock Acyclic edge colorings of graphs.
\newblock {\em Journal of Graph Theory}, 37(3):157--167, 2001.

\bibitem{DBLP:journals/ejc/EsperetP13}
Louis Esperet and Aline Parreau.
\newblock Acyclic edge-coloring using entropy compression.
\newblock {\em Eur. J. Comb.}, 34(6):1019--1027, 2013.

\bibitem{fialho2020new}
Paula~MS Fialho, Bernardo~NB de~Lima, and Aldo Procacci.
\newblock A new bound on the acyclic edge chromatic number.
\newblock {\em Discrete Mathematics}, 343(11):112037, 2020.

\bibitem{fiam}
J.~Fiam\v{c}ik.
\newblock The acyclic chromatic class of a graph (in {R}ussian).
\newblock {\em Math. Slovaca}, 28:139--145, 1978.

\bibitem{Flajolet:2009:AC:1506267}
Philippe Flajolet and Robert Sedgewick.
\newblock {\em Analytic Combinatorics}.
\newblock Cambridge University Press, New York, NY, USA, 1 edition, 2009.

\bibitem{giotis2017acyclic}
Ioannis Giotis, Lefteris Kirousis, Kostas~I Psaromiligkos, and Dimitrios~M
  Thilikos.
\newblock Acyclic edge coloring through the {L}ov{\'a}sz local lemma.
\newblock {\em Theoretical Computer Science}, 665:40--50, 2017.
\newblock Correction in: \href{http://arxiv.org/abs/1407.5374}{\tt
  arXiv:1407.5374}.

\bibitem{Gutowski2018}
Grzegorz Gutowski, Jakub Kozik, and Xuding Zhu.
\newblock Acyclic edge coloring using entropy compression.
\newblock In {\em Abstracts of the 7th Polish Combinatorial Conference}, 2018.
\newblock Available:
  \href{https://7pcc.tcs.uj.edu.pl/program.php}{\url{https://7pcc.tcs.uj.edu.pl/program.php}}.

\bibitem{harvey2015algorithmic}
Nicholas~JA Harvey and Jan Vondr{\'a}k.
\newblock An algorithmic proof of the lov{\'a}sz local lemma via resampling
  oracles.
\newblock In {\em 2015 IEEE 56th Annual Symposium on Foundations of Computer
  Science}, pages 1327--1346. IEEE, 2015.

\bibitem{molloy1998further}
Michael Molloy and Bruce Reed.
\newblock Further algorithmic aspects of the local lemma.
\newblock In {\em Proceedings of the thirtieth annual ACM symposium on Theory
  of computing}, pages 524--529, 1998.

\bibitem{moser2009constructive}
Robin~A Moser.
\newblock A constructive proof of the {L}ov{\'a}sz {L}ocal {L}emma.
\newblock In {\em Proceedings of the 41st annual ACM Symposium on Theory of
  Computing}, pages 343--350. ACM, 2009.

\bibitem{moser2010constructive}
Robin~A Moser and G{\'a}bor Tardos.
\newblock A constructive proof of the general {L}ov{\'a}sz {L}ocal {L}emma.
\newblock {\em Journal of the ACM (JACM)}, 57(2):11, 2010.

\bibitem{ndreca2012improved}
Sokol Ndreca, Aldo Procacci, and Benedetto Scoppola.
\newblock Improved bounds on coloring of graphs.
\newblock {\em European Journal of Combinatorics}, 33(4):592--609, 2012.

\bibitem{vizing1965critical}
Vadim~G Vizing.
\newblock Critical graphs with a given chromatic class.
\newblock {\em Diskret. Analiz}, 5(1):9--17, 1965.

\end{thebibliography}

\section*{Statements and Declarations}
\noindent{\bf Funding.} The authors declare that no funds, grants, or other support were received during the preparation of this manuscript.

\noindent{\bf Competing Interests.} The authors have no relevant financial or non-financial interests to disclose.

\section*{Appendix: Asymptotic estimate for a species of trees}
In this Appendix, where we follow the notation and terminology of Flajolet and Sedgewick in \cite{Flajolet:2009:AC:1506267}, we will compute upper and lower bounds of the radius of convergence of the counting generating function (GF)  of unordered trees with $n$ nodes whose outdegrees are even and $\geq 4$. We denote this class by $\T$ and its GF by $\nicetrees(z)$.  Note that since the forests we consider have a finite number of trees and since we are interested in exponential approximations, working with trees rather than forests suffices. Let us also add that only the lower bound will be needed for the proof of  Lemma \ref{lem:boundtrees}, however the upper bound is necessary to limit the search space.

We begin by giving a functional equation for $T(z)$, which will serve as the basis for our analysis.

\begin{lemma}\label{lemma:spec1}
The generating function $\nicetrees(z)$ obeys the functional equation:
\begin{equation}\label{eq:spec}
 T(z) = \frac{z}{2} \exp\left(\sum\limits_{j \geq 1} \frac{\nicetrees(z^j)}{j}\right) + \frac{z}{2}\exp\left(\sum\limits_{j \geq 1} (-1)^j \frac{\nicetrees(z^j)}{j} \right)  
-\frac{z}{2}\left({T^2(z)} + {\nicetrees(z^2)} \right).
\end{equation}
\end{lemma}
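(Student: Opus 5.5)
We derive the equation with the symbolic method for unlabeled structures. A tree in $\T$ is either a single node (a leaf) or a root node with a multiset of subtrees from $\T$ of even cardinality at least $4$. This gives the specification
\[
\T = \mathcal{Z} \times \MSet_{\{0\}\cup\{4,6,8,\ldots\}}(\T).
\]
The task is therefore to express the generating function of $\MSet$ restricted to even cardinalities $\geq 4$ in terms of $T$, and then to check that the result matches \eqref{eq:spec}.

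First we use the bivariate multiset construction. Marking the number of components with $u$,
\[
\sum_{k\ge 0} M_k(z)\,u^k = \exp\Bigl(\sum_{j\ge1}\frac{u^j T(z^j)}{j}\Bigr),
\]
where $M_k(z)$ is the GF of multisets of exactly $k$ trees. Extracting even cardinalities is done by averaging the values at $u=1$ and $u=-1$. At $u=-1$ the $j$-th term of the exponent carries the factor $(-1)^j$. Hence
\[
\sum_{k \text{ even}} M_k(z) = \frac12\exp\Bigl(\sum_{j\ge1}\frac{T(z^j)}{j}\Bigr)+\frac12\exp\Bigl(\sum_{j\ge1}(-1)^j\frac{T(z^j)}{j}\Bigr).
\]

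Next we remove the cardinality-$2$ part. By Pólya's formula for $\MSet_2$ (the $\MSetTwo$ construction), $M_2(z)=\tfrac12\bigl(T(z)^2+T(z^2)\bigr)$. We keep $M_0=1$, which corresponds to the leaf. The GF of the admissible multisets is then $\sum_{k\text{ even}}M_k - M_2$. Multiplying by $z$ for the root gives exactly \eqref{eq:spec}. The leaf term $z\cdot M_0 = z$ is already contained in the two exponentials, since each of them has constant term $1$ and together they contribute $\frac z2+\frac z2$.

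The one delicate point is justifying the $u\mapsto -1$ substitution and the cycle-index formula for $M_2$. Both are standard consequences of the unlabeled multiset construction (Flajolet and Sedgewick, Ch.~I and III), applied as formal power series identities. This is legitimate because $T$ has zero constant term, so every $T(z^j)$ is well defined formally and all the sums involved converge in the formal topology. Beyond that we only need to verify the constant-term bookkeeping described above.
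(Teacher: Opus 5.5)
Your proposal is correct and takes essentially the same route as the paper. The paper starts from $\mathcal{H}=\mathcal{Z}\times\MSet(\mathcal{H})$, averages $\MSet$ with $\MSetOdd$ (multisets of odd size weighted by $-1$) to keep only even root degrees, and then subtracts $\mathcal{Z}\times\MSetTwo(\T)$; this is exactly your specialization $u=\pm1$ of the bivariate multiset GF followed by removal of $M_2$. Your version is slightly more careful, since you derive the $\MSetOdd$ generating function from the bivariate formula and note why the formal substitution $u=-1$ is legitimate, whereas the paper simply cites the symbolic-method dictionary.
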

\begin{proof}
It is a standard result  \cite[Figure I.13, p. 65]{Flajolet:2009:AC:1506267})
that the class $\mathcal{H}$ of (rooted)  {\em unordered} trees is 
the product of a singleton and the class of multisets of elements in $\mathcal{H}$:
\begin{equation}\label{eq:specAllTrees}
    \mathcal{H} = \mathcal{Z} \times \MSet(\mathcal{H}).
\end{equation}
Our specification is a straightforward modification of the above.
For a given combinatorial class $\mathcal{X}$, denote 
by $\MSetOdd(\mathcal{X})$ the class of multisets of elements drawn from $\mathcal{X}$
in which multisets of odd size are weighted by a factor of $-1$.
We can use this construction ``delete'' from the recursive 
specification in Equation \eqref{eq:specAllTrees} those
trees whose roots are of odd degree, after 
appropriately weighting the whole construction by $1/2$ to avoid overcounting. 
In this way we obtain the class $\mathcal{H}_{even}$ of {\em  unordered}
trees whose internal nodes all have even degree:

\begin{equation*} 
\mathcal{H}_{even} = \mathcal{Z} \times \frac{\MSet(\mathcal{H}_{even}) + 
\MSetOdd(\mathcal{H}_{even})}{2}.
\end{equation*}
Further deletion from the recursive specification of $\mathcal{H}_{even}$ of
trees with roots of  degree $2$ yields:
\begin{equation*}
\nicetreesc = \mathcal{Z} \times \frac{\MSet(\nicetreesc) + 
\MSetOdd(\nicetreesc)}{2} - 
\mathcal{Z}\times\MSetTwo\left(\nicetreesc\right), 
\end{equation*}
where $\MSet_2(\mathcal{X})$ is the class of multisets drawn from $\mathcal{X}$ with exactly two elements.

Using the standard ``dictionary'' in \cite[Figure I.18, p. 93]{Flajolet:2009:AC:1506267}, we then translate the above construction to a functional equation for
the GF  $\nicetrees(z)$ of the class $\nicetreesc$ ---the GF's  for $\MSet(\mathcal{X}),$ $\MSetOdd(\mathcal{X})$ and $\MSet_2(\mathcal{X})$ for a class $\mathcal{X}$ with GF $X(z)$ are $$
\exp\left(\sum\limits_{j \geq 1} \frac{X(z^j)}{j}\right), \exp\left(\sum\limits_{j \geq 1} (-1)^j \frac{X(z^j)}{j}\right) \mbox{ and } \frac{1}{2}\left(X^2(z) + X(z^2) \right),$$ respectively. 
\end{proof}

We continue by considering two classes of {\em ordered} trees.  The first class 
is the class of ordered full binary trees, i.e. binary trees whose internal nodes have outdegree exactly 2. The second one 
is the class of ordered trees  whose internal nodes have outdegrees an even integer $\geq 4$.  Let $B(z)$ and $C(z)$ be their respective GF's. The first  class  is introduced because we can easily get a closed-form formula for $B(z)$ with coefficients that can be expressed in terms of Catalan numbers. The second  class   is introduced only to be used as an intermediary in Lemma \ref{lem:planeTreeCoeffIneq}  to prove that $B(z)$ dominates coefficient-wise $T(z)$.

We can easily see that $B(z)$ and $C(z)$ satisfy the functional equations:
\begin{equation}\label{eq:defB}
    B = z(1+B^2).
\end{equation}
and 
\begin{equation}\label{eq:defC} C(z) = z\left( 1+ C^4(z) + C^6(z) + \cdots \right) = z\left( 1+ \frac{C^4(z)}{1 - C^2(z)}\right),\end{equation}
and observe that from the functional equation \eqref{eq:defB} we get that 
\begin{equation}\label{eq:B2}
B(z) = \frac{1-\sqrt{1-4z^2}}{2z}
\end{equation}
(the solution of the  quadratic functional equation with the + sign is not retained since it gives rise to a $B(z)$ not analytic at 0).
Therefore  $B(z)$ has  radius 1/2 (also $C(z)$ can be proved to have a radius of exactly 1/2, but we do not need this fact). Finally, the coefficient of $B(z)$ for an odd $2n+1, n\geq 0,$ is the Catalan number $C_n$.

Now we prove:

\begin{lemma}\label{lem:planeTreeCoeffIneq} $B(z)$ dominates coefficient-wise C(z), i.e.,
 $[z^n] C(z) \leq [z^n] B(z)$ and $C(z)$ dominates coefficient-wise $T(z).$
\end{lemma}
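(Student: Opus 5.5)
The two dominations are proved separately. Both rest on comparing the objects being counted, with no analysis needed.

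\emph{$C(z)$ dominates $T(z)$.} Every unordered tree in $\T$ with $n$ nodes can be given at least one plane embedding: fix any linear order of the children at each internal node. The result is an ordered tree with the same number of nodes, all internal outdegrees even and $\geq 4$. Choose one embedding canonically for each unordered tree, for instance by ordering subtrees according to a fixed total order on isomorphism classes. This gives a map from unordered trees of size $n$ to ordered trees of size $n$. The map is injective, because forgetting the order recovers the unordered tree. Hence $[z^n]T(z)\leq [z^n]C(z)$.

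\emph{$B(z)$ dominates $C(z)$.} I would build an injection from ordered trees counted by $C$ to ordered full binary trees with the same number of nodes. Take an internal node $v$ of outdegree $2k$, $k\geq 2$, with ordered children $u_1,\dots,u_{2k}$. Replace its star by a full binary ``caterpillar'' with $2k-1$ internal nodes. The caterpillar has $2k$ leaf slots, and these slots receive the (recursively transformed) subtrees $u_1,\dots,u_{2k}$. Counting nodes, the star uses $1$ node plus the children, while the caterpillar uses $2k-1$ nodes plus the children. So this adds $2k-2\geq 2$ nodes and the sizes do not match.

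To repair the size, I would use the leaves. Each original internal node of outdegree $2k$ is paired with $2k-2$ extra internal binary nodes. Removing $2k-2$ nodes requires absorbing some leaves, but leaves cannot be absorbed without losing information. The cleaner route is the analytic form of domination. Write $B=z\,\phi_B(B)$ with $\phi_B(w)=1+w^2$ and $C=z\,\phi_C(C)$ with $\phi_C(w)=1+w^4/(1-w^2)$. If $\phi_C\preceq\phi_B$ coefficientwise then domination would follow by induction. That fails, because $\phi_C$ has $w^4,w^6,\dots$ while $\phi_B$ does not.

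So I would compare substituted series instead. The key identity is
\[
B = z(1+B^2) \;\Longrightarrow\; B^2 = z^2(1+B^2)^2,
\]
and iterating this expresses $B$ as $z\bigl(1+z^2(1+B^2)^2\bigr)$ and so on. Every term $z^{2k-2}B^{2k}$ appears with positive coefficient in $B=z\sum_{k}\binom{?}{?}z^{2k-2}B^{2k}$-type expansions. Concretely, I would show that $B\succeq z\bigl(1+\sum_{k\geq2}B^{2k}\,z^{0}\cdots\bigr)$ fails on size, so the right statement must reweight.

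The main obstacle is therefore the size bookkeeping. I expect to resolve it via the inequality $[z^n]C\leq[z^n]B$ proved by induction on $n$ using the recursions. Suppose $[z^m]C\leq[z^m]B$ for $m<n$. Then
\[
[z^n]C=[z^{n-1}]\sum_{k\ge2}C^{2k}\le[z^{n-1}]\sum_{k\ge2}B^{2k},
\]
and it remains to check that $[z^{n-1}]\sum_{k\ge2}B^{2k}\le[z^{n-1}]B^2$. Since $B=z(1+B^2)$, we have $B^2 = B/z-1$. Then $\sum_{k\geq 2}B^{2k}=B^4/(1-B^2)$, which can be written in closed form through~\eqref{eq:B2} and compared with $B^2$ coefficientwise using Catalan numbers. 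For example, $B^4/(1-B^2)$ corresponds to sequences of at least two pairs of binary trees, and these can be injected into pairs of binary trees of the same total size by grafting, i.e.\ $B^{2j}$ into $B^2$ via $B^2=z^{-1}B-1$ repeatedly. This last coefficient comparison, done either by an explicit grafting injection or by checking Catalan-number inequalities, is the step I expect to require the real work.
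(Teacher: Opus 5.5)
Your argument that $C(z)$ dominates $T(z)$ is correct and is what the paper calls obvious: a canonical plane embedding is a section of the forgetful map, hence an injection that preserves size and outdegrees. The second part has a genuine gap, because the inequality you reduce everything to, $[z^{n-1}]\sum_{k\ge2}B^{2k}\le[z^{n-1}]B^2$, is false. Put $u=z^2$ and let $c(u)=\sum_m C_m u^m$ be the Catalan series. Then $B/z=c(u)$, so $B^{2k}=u^k c(u)^{2k}$ and $[z^{2m}]B^{2k}=\frac{k}{m}\binom{2m}{m-k}$. Summing over $k\ge2$ gives $\frac{m-1}{2(m+1)}\binom{2m}{m}$, which is $\frac{m-1}{2}$ times $[z^{2m}]B^2=C_m$. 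Concretely, at $n=9$ you would need $[z^8](B^4+B^6+B^8)=14+6+1=21\le 14=[z^8]B^2$, which fails. Even the single term $B^4$ eventually overtakes $B^2$: $[z^{10}]B^4=48>42=[z^{10}]B^2$, and asymptotically $[z^{2m}]B^{2k}/[z^{2m}]B^2\to k$. So no grafting injection of $B^{2k}$ into $B^2$ exists, and your chain of inequalities cannot be completed. The loss occurs when you replace $C^{2k}$ by $B^{2k}$ via the induction hypothesis. $C$ is much sparser than $B$ (e.g.\ $[z^9]C=5$ versus $[z^9]B=14$), and that step throws the slack away.

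The missing idea is to relate $B$ to $C$ exactly instead of bounding $C$ by $B$ term by term. The series $C/(1-C^2)$ has zero constant term and satisfies $B=z(1+B^2)$. Indeed, $1+C^2/(1-C^2)^2=(1-C^2+C^4)/(1-C^2)^2$, while the equation for $C$ gives $z=C(1-C^2)/(1-C^2+C^4)$. By uniqueness of the power-series solution of $B=z(1+B^2)$ with $B(0)=0$, it follows that $B=C/(1-C^2)$. The paper reaches the same identity by eliminating $z$ and discarding the unit factor $C^2+CB-1$. Hence $B-C=C^3+C^5+\cdots$ has nonnegative coefficients. In your own framework this reads $B^2=C^2/(1-C^2)^2=\sum_{j\ge1}jC^{2j}$, which dominates $\sum_{k\ge2}C^{2k}$ term by term. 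Therefore $[z^n]C=[z^{n-1}]\sum_{k\ge2}C^{2k}\le[z^{n-1}]B^2=[z^n]B$ for $n\ge2$ directly, with no induction needed.
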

\begin{proof} The assertion comparing $C(z)$ with $T(z)$ is obvious, since the only difference between the respective classes of trees is that the $C(z)$ refers to ordered trees and $T(z)$ to unordered. 

Throughout the rest of the proof, we will  view
$C:=C(z)$ and $B:=B(z)$ as elements of the ring
$\mathbb{C}[[z]]$ of complex  formal power series in $z$.

Eliminating $z$ from both \cref{eq:defC,eq:defB} leads to
\begin{equation*}
    (C^2 + CB - 1)(C^2B + C - B) = 0.
\end{equation*}
The first factor $(C^2 + CB - 1)$ has constant term $-1$ and is a
unit in $\mathbb{C}[[z]]$. For the product to be zero,
the second factor must therefore be zero, so
\begin{equation*}
    B = \frac{C}{1-C^2}
\end{equation*}
and 
\begin{equation*}
    B - C = \frac{C^3}{1-C^2} = C^3 + C^5 + C^7 + \dots .
\end{equation*}
The right-hand side of the last equality is finally seen to have non-negative coefficients,
which implies the desired inequality. 
\end{proof}

The following lemma shows that $\nicetrees(z)$ is analytic 
in some non-trivial disc in $\mathbb{C}$ and provides
some rough  bounds on its radius of convergence. 

\begin{lemma}\label{lemma:tIsAnalyt}
The generating function $\nicetrees(z)$ is analytic
in some disk $D \subseteq \mathbb{C}$ centered at $0$ and 
of radius $\rho$ such that $1/2 \leq \rho < 1$
\end{lemma}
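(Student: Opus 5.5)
The plan is to get the two bounds on $\rho$ separately, both by comparing coefficient sequences.

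For the lower bound $\rho\ge 1/2$, I would use Lemma~\ref{lem:planeTreeCoeffIneq}. It says that $B(z)$ dominates $C(z)$ coefficient-wise, and $C(z)$ dominates $T(z)$; hence $0\le [z^n]T(z)\le [z^n]B(z)$ for all $n$. By the closed form \eqref{eq:B2}, $B(z)$ has radius of convergence exactly $1/2$. Explicitly, its coefficients are Catalan numbers $C_k$ at $n=2k+1$, so they grow like $4^k\approx 2^n$. Therefore
\[
\limsup_n ([z^n]T)^{1/n}\le \limsup_n ([z^n]B)^{1/n}=2 .
\]
By the Cauchy--Hadamard formula, $T$ has radius $\rho\ge 1/2$, and since $T$ is a power series it is analytic on the open disc $|z|<\rho$.

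For the upper bound $\rho<1$, I would show that $[z^n]T(z)$ grows exponentially along some subsequence with base strictly greater than $1$. A concrete subfamily is enough. Take unordered trees in which every internal node has outdegree exactly $4$ and all internal nodes lie on a single path (a ``caterpillar''). Each internal node on the spine has one internal child and three leaf children, except the last, which has four leaves. This gives only one tree per size, so I need a richer family. Instead, I would let each spine node have outdegree $4$ or $6$, with the spine order recorded from the root downward. Unordered symmetry does not identify distinct sequences of outdegrees along the path, because the spine is determined as the unique path of internal nodes. A spine of length $m$ with $j$ nodes of degree $6$ has $n=1+4m+2j$ nodes in total, and the number of such trees is $\binom{m}{j}$. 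For $n$ of the appropriate parity, summing over admissible $(m,j)$ yields about $\phi^{n/2}$ trees for some constant $\phi>1$; this is a Fibonacci-type count in the increments $4$ and $6$. Hence $\limsup_n t_n^{1/n}>1$, and so $\rho<1$.

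The main obstacle is making the lower bound on the count rigorous in the presence of the unordered identification. I must check that two distinct degree sequences cannot produce isomorphic rooted trees. This holds because in such a tree at most one child of each node is internal, so an isomorphism must preserve the spine and its degree sequence. The rest is routine: the count satisfies $a_N=a_{N-4}+a_{N-6}$ on the spine-increment index, whose dominant root exceeds $1$, and this gives the exponential growth.
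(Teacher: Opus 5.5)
Your proof is correct. The lower bound $\rho\ge 1/2$ is the paper's own argument: coefficient-wise domination of $T(z)$ by $B(z)$ from Lemma~\ref{lem:planeTreeCoeffIneq}, together with the closed form~\eqref{eq:B2}.

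For $\rho<1$ you take a different and more careful route. The paper only observes that $[z^n]T(z)\ge 1$ for odd $n\ge 5$ and that these coefficients increase. That alone gives only $\rho\le 1$, since polynomially growing coefficients (e.g.\ $a_n=n$) are compatible with radius exactly $1$. In fact $[z^5]T=[z^7]T=1$, so the claimed monotonicity is not even strict at the start.

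Your caterpillar family supplies the exponential growth that is actually needed. Trees with $n=2K+1$ nodes and spine outdegrees in $\{4,6\}$ correspond bijectively to compositions of $K$ into parts $2$ and $3$. Your rigidity argument is right: every node has at most one internal child, so a rooted isomorphism preserves the spine and its outdegree sequence, and no two such trees are identified. The count grows like $p^{K}$, where $p\approx 1.3247$ is the real root of $y^3=y+1$. Hence $\rho\le p^{-1/2}\approx 0.869$.

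The paper's remark buys brevity, and its sharp bound $\rho\le 0.6678$ comes later from the computation in Theorem~\ref{thm:boundstoradius}. Within this lemma, however, it is your argument that actually establishes the strict inequality.
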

\begin{proof}
Since by Lemma   \ref{lem:planeTreeCoeffIneq}, $T(z)$ is dominated coefficient-wise by $B(z)$ and the latter has radius exactly 1/2 (see its closed form in Equation \eqref{eq:B2}), the radius of convergence of the former is at least $1/2$.

$\nicetrees(z)$ also has a
radius of convergence $< 1$, because for odd $n\geq 5$, the
coefficients $[z^n]\nicetrees(z)$ are at least  1 (exactly 1, for $n=5$)  and strictly increasing.

\end{proof}

Before proceeding to the main body of this section, first note that if we formally unroll the exponentials in the rhs of Equation \eqref{eq:spec} as power series, all negative signs inside the exponentials cancel out. Also, the negative signs outside the exponential are absorbed by positive ones. 

Finally, if we further unroll all $T(z^j), j\geq 1,$ as power series in both sides of this equation,  then because of the factor $z$ at the rhs
we get a recurrence for $[z^n] T(z), n \geq 1 $, i.e. each $[z^n] T(z), n \geq 1,$ can be expressed in terms of lower-order coefficients of $T(z)$. This suffices to
fix $\nicetrees(z)$ as the sole solution of the functional equation  with $T(0)=0$. Furthermore, it
allows us to compute truncations of any desired order for $T(z)$, and therefore for  $T(z^j), j\geq 2,$ as well. 
Actually, it can be easily seen that the recurrences are polynomial expressions with nonnegative coefficients. These facts also hold for all implicitly defined functions we examine in this work.

The following two lemmas will be useful in the sequel.

The first one allows us to relate $f(x^i)$ to $f(x^2)$ of a power series $f(z)$
for nonnegative reals $x$ inside an appropriate interval. We omit the easy proof.

\begin{lemma}\label{lemma:powerbounds}
Let $f(z)$ be a power series with nonnegative coefficients and
$[z^0] f(z) = 0$, $[z^1] f(z) = 1$ and radius of convergence $0 < r < 1$. 
Then $f(x^i) \leq f(x^2)^{i/2}$ for $i \geq 2, x \in [0,r^{1/2})$.
\end{lemma}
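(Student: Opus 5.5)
The idea is to substitute $y=x^2$ and reduce the claim to an inequality between two convergent series with nonnegative terms, compared term by term. The hypothesis $[z^1]f(z)=1$ lets us factor out the linear term, and Bernoulli's inequality then does the rest. So let $y=x^2\in[0,r)$ and $t=i/2\geq 1$; the claim becomes $f(y^t)\leq f(y)^t$. First, both sides are finite: $y<r$, and since $r<1$ we have $y<1$, hence $y^t\leq y<r$. Write $f(z)=z+\sum_{n\geq 2}a_n z^n$ with $a_n\geq 0$. The case $y=0$ is trivial, so assume $0<y<1$.

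\textbf{Step 1 (factoring).} We have
$$f(y)^t = y^t\Bigl(1+\sum_{n\geq 2}a_n y^{n-1}\Bigr)^t.$$
Since $t\geq 1$ and the sum inside is nonnegative, Bernoulli's inequality $(1+u)^t\geq 1+tu$ for $u\geq 0$ gives
$$f(y)^t\geq y^t+t\sum_{n\geq 2}a_n y^{t+n-1}.$$

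\textbf{Step 2 (termwise comparison).} The left-hand side is
$$f(y^t)=y^t+\sum_{n\geq 2}a_n y^{nt}.$$
The linear terms match, so it suffices to show that for every $n\geq 2$,
$$t\,a_n\,y^{t+n-1}\geq a_n\,y^{nt}.$$
Because $a_n\geq 0$ and $t\geq 1$, it is enough to have $y^{t+n-1}\geq y^{nt}$. For $0<y<1$ this is equivalent to $t+n-1\leq nt$, that is, $(n-1)(t-1)\geq 0$, which holds. Summing over $n\geq 2$ gives the claim; all series involved have nonnegative terms and converge, so the termwise comparison is legitimate.

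\textbf{Main point.} The only thing needing care is that no integrality of the coefficients is assumed. A cruder route through superadditivity, $(\sum u_n)^t\geq\sum u_n^t$, would need $a_n^t\geq a_n$, which fails for $0<a_n<1$. This is why I plan to use the normalization $[z^1]f=1$ together with Bernoulli's inequality instead. The hypotheses $y<1$ (from $r<1$) and $t\geq 1$ (from $i\geq 2$) are exactly what the exponent comparison in Step 2 needs.
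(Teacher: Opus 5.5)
Your proof is correct and complete. The factorization $f(y)^t=y^t(1+u)^t$ with $u=\sum_{n\ge2}a_ny^{n-1}\ge0$ is valid, Bernoulli's inequality applies, and the exponent comparison $(n-1)(t-1)\ge0$ for $0<y<1$ closes the termwise bound. Convergence of every series is justified by $y^t\le y<r$.

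The paper gives no argument here (it says ``We omit the easy proof''), so there is no written route to compare yours against. A natural candidate for the omitted proof is the superadditivity argument you mention. That argument needs $a_n^{i/2}\ge a_n$, so it only works for coefficients in $\{0\}\cup[1,\infty)$. This suffices for the paper's sole use of the lemma, to $B(z)$, whose coefficients are Catalan numbers. It does not cover the lemma as literally stated, with arbitrary nonnegative coefficients. Your version does cover it, and in fact works for every real $t\ge1$, not just $t=i/2$.

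One small remark: the factor $t$ from Bernoulli is slack, since $(1+u)^t\ge1+u$ already suffices. Equivalently, write $f(z)=z\,g(z)$ with $g(z)=1+\sum_{n\ge2}a_nz^{n-1}$, which is nondecreasing on $[0,r)$ with $g\ge1$. Then $f(y^t)=y^tg(y^t)\le y^tg(y)\le y^tg(y)^t=f(y)^t$. This is your termwise comparison packaged in one line.
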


The second one gives us  the radius of convergence for
implicitly-defined generating functions. It is a consequence  of the smooth implicit-function schema presented by Flajolet and Sedgewick in \cite[Theorem VII.3, p. 468]{Flajolet:2009:AC:1506267}. 
For reasons of completeness, we include its proof. 

\begin{lemma}\label{lemma:implicitsing}
Let $G(z,w)$ be an entire  function such that:
\begin{itemize}
    \item $[z^nw^m]G(z,w) \geq 0$ and nonzero for some $m\geq 1$,
    \item $G(0,w) = 0$, for all $w$,
 \end{itemize}
  Then there exists an analytic solution $y(z) = \sum_{n=1}^{\infty}a_nz^n$ to the functional
equation $y=G(z,y)$ with $y(0)=0$, with uniquely determined $a_n \geq 0$,   and  positive finite radius of convergence $r$, such that   $\lim_{x \rightarrow r^{-}} y(x) = s$ exists and is finite.  Moreover $r,s$ satisfy the {\em characteristic} system,
\begin{equation*}
    G(r,s) = s,  \ 
    G_w(r,s) = 1.
\end{equation*}
\end{lemma}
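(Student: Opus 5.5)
Existence and uniqueness of the formal solution come first. Since $G(0,w)=0$, every monomial of $G$ carries a factor $z^n$ with $n\geq 1$. So the coefficient of $z^n$ in $G(z,y(z))$ depends only on $a_1,\ldots,a_{n-1}$, through a polynomial with nonnegative coefficients. Comparing coefficients in $y=G(z,y)$ therefore determines each $a_n$ recursively, uniquely, and with $a_n\geq 0$. This is exactly the observation made in the paper right before Lemma \ref{lemma:powerbounds}.

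Next I would show the radius $r$ is positive, using Picard iteration, i.e. majorant series. Put $y_0=0$ and $y_{k+1}=G(z,y_k)$. These partial solutions agree with $y$ up to order $k$, and they increase coefficientwise. Because $G$ is entire, on a small polydisc $|z|\le\epsilon$, $|w|\le 1$ we have $|G|\le M$, and $G(z,w)=O(|z|)$. Hence for real $x$ small enough the real map $w\mapsto G(x,w)$ sends $[0,1]$ into itself. The iterates $y_k(x)$ then stay bounded by $1$. By monotone convergence for nonnegative coefficients, $y(x)\le 1$, so $r\geq x>0$.

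For finiteness of $r$, I would use the fact that some coefficient $[z^nw^m]G$ with $m\ge1$ is positive, and this forces superlinear growth. If $m\ge 2$, the monomial $z^nw^m$ gives $y(x)\ge c\,x^n y(x)^m$. Together with $y(x)\ge c'x^{n'}>0$, a standard comparison makes $y$ blow up or contradict itself for large real $x$. If only $m=1$ occurs, one argues directly from $y\geq G(x,0)+x^n c\,y$ that $y$ becomes infinite once $c x^n\geq1$. This degenerate case needs care, and one may need the paper's implicit assumption that $G$ is nonlinear in $w$; I would note it.

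Finally, the characteristic system. On $[0,r)$, $y$ is increasing, so $s=\lim_{x\to r^-}y(x)$ exists in $[0,\infty]$.

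First I show $s<\infty$. Write $H(x,w)=G(x,w)-w$. For fixed $x>0$, $H(x,\cdot)$ is convex on $w\ge0$ because $G_{ww}\ge0$, and it is strictly convex and superlinear when $G$ is nonlinear in $w$. The point $y(x)$ is its smallest nonnegative root. If $y(x)\to\infty$ as $x\to r^-$, superlinearity near $x=r$ contradicts $G(x,y(x))=y(x)$. Hence $s<\infty$, and continuity of the entire function $G$ gives $G(r,s)=s$.

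Then I show $G_w(r,s)=1$. Suppose $G_w(r,s)\neq1$. The analytic implicit function theorem applied to $H$ at $(r,s)$ gives an analytic continuation of $y$ to a neighbourhood of $z=r$. Pringsheim's theorem says the positive real point $r$ is a singularity of a series with nonnegative coefficients, so this is a contradiction, and therefore $G_w(r,s)=1$.

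The main obstacle is this last step together with the finiteness of $s$. One must know that the limit stays in the domain where the implicit function theorem applies; here that is automatic since $G$ is entire. One must also check that $y(x)$ is the minimal root, so that $G_w(x,y(x))<1$ for $x<r$ and $G_w$ increases to $1$ at $r$, rather than the argument degenerating.
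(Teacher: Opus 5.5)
Your proof is correct, but it argues in the opposite direction from the paper's.

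\textbf{The paper's route.} It gets $y$ from the implicit function theorem at the origin. It then follows the positive real axis while $G_w(x,y(x))<1$, which is increasing in $x$, and takes $r$ to be the point where this quantity reaches $1$. It shows $y$ is singular there by differentiating $y=G(z,y)$: regularity together with $G_w(r,s)=1$ would force $G_z(r,s)=0$.

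\textbf{Your route.} You take $r$ to be the radius of convergence of the uniquely determined nonnegative series. You get $r>0$ from the Picard/majorant bound, and $r<\infty$ and $s<\infty$ from $y(x)\ge c\,x^n y(x)^m$ with $m\ge 2$. Continuity gives $G(r,s)=s$. Finally, $G_w(r,s)=1$, because otherwise the implicit function theorem at $(r,s)$, through its uniqueness clause, would continue $y$ past $r$, contradicting Pringsheim's theorem.

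\textbf{Comparison.} The paper's argument is shorter and shows directly why the crossing point is singular. Yours is more self-contained. It proves what the paper only asserts, namely that $y(x)$ stays bounded as $x\to r^-$; the paper's claim that $G_w(x,y(x))$ ``goes to infinity'' already presupposes that $y$ stays regular. It also makes explicit the Pringsheim step that the paper tacitly needs to identify its crossing point with the radius of convergence. Your closing remark that $y(x)$ is the minimal root, with $G_w(x,y(x))<1$ on $[0,r)$, is true but not needed: the Pringsheim contradiction alone gives $G_w(r,s)=1$.

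\textbf{Hypotheses.} Your caution about nonlinearity is justified. For $G=z+zw$ one gets $y=z/(1-z)$ and $s=\infty$, and the characteristic system has no solution. So the hypothesis should require a positive coefficient with $m\ge 2$. Likewise, your bound $y(x)\ge c'x^{n'}>0$ tacitly needs $G(z,0)\not\equiv 0$; for $G=zw^2$ one has $y\equiv 0$. Both conditions hold for $\hat{G}$ and $\check{G}$, and the paper's own proof uses them implicitly.
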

\begin{proof}
 Because $G(0,w) = 0$, for all $w$, we conclude that $z$ can be factored out of $G$. Therefore $G_w(0,0) = 0$. By the implicit function theorem, since $G_w(0,0) = 0 \neq 1$, there exists
a unique solution $y= y(z)$  to the  equation $y-G(z,y)=0$ which is analytic around $0$. Since $z$ can be factored out of $G$, we have that $[z]y(z)$ is given by $[zw^0]G(z,w)$;  further, since $[z^nw^m]G(z,w)$ is positive for some $m\geq 1$ we conclude that each 
coefficient $[z^j]y(z), j\geq 2,$ can be expressed by a recurrence in terms of lower-order ones, therefore they are uniquely determined. Since the coefficients of $G$ are nonnegative, the recurrences also show the $[z^j]y(z)$ to be nonnegative.  Now, obsereve that the mapping $x \mapsto G_w(x,y(x))$ is  strictly increasing for
nonnegative real values $x$, as long as $y(z)$ is regular at $z=x$. 
If $G_w(z,y(z)) < 1$, the implicit function theorem guarantees that
$y(z)$ is regular in a neighbourhood of $z$. Since $G_w(0,y(0)) = 0$ and $x \mapsto G_w(x,y(x))$ is strictly increasing and goes to infinity with $x\rightarrow \infty$, there
exists some finite limit point $r$ such that 
$\lim_{x\rightarrow r^-} y(x) = s$ is finite 
and satisfies $G_w(r,s)=1$. If $y(z)$ was regular at $z=r$,
then we would have:
\begin{align}
y_z(r) = G_z(r,y(r)) + G_w(r,y(r))y_z(r) = G_z(r,y(r)) + y_z(r)
\end{align}
which would imply $G_z(r,s) = 0$ contradicting
that $r,s$ are positive  and  $[z^nw^m]G(z,w)$ are not all zero.\end{proof}


We will now place  the radius of
convergence for $\nicetrees(z)$ into a narrow interval.

\begin{theorem}\label{thm:boundstoradius}
The radius convergence $\rho$ of  $T(z)$ satisfies: 
\begin{equation*}
    0.6677 \leq \rho \leq 0.6678.
\end{equation*}
\end{theorem}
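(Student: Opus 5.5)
The plan is to sandwich $\rho$ between the radii of two auxiliary implicitly defined power series. Each auxiliary series comes from replacing the ``non-analytic'' part of Equation \eqref{eq:spec} by an explicit approximation, and each radius is located with the characteristic system of Lemma \ref{lemma:implicitsing}.

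First split the rhs of \eqref{eq:spec} into the terms involving $T(z)$ itself (the $j=1$ terms of both exponentials and $-\frac{z}{2}T^2(z)$) and the terms involving $T(z^j)$, $j\geq 2$. The $j=1$ terms combine to $\frac{z}{2}e^{-T(z^2)/2}\ldots$; more precisely, with $A(z)=\sum_{j\geq 2}\frac{T(z^j)}{j}$ and $A^-(z)=\sum_{j\geq 2}(-1)^j\frac{T(z^j)}{j}$, the equation reads
\[
T=\frac{z}{2}\left(e^{A(z)}e^{T}+e^{A^-(z)}e^{-T}\right)-\frac{z}{2}\left(T^2+T(z^2)\right).
\]
Since $\rho\geq 1/2$ by Lemma \ref{lemma:tIsAnalyt}, the functions $T(z^j)$, $j\geq 2$, are analytic in a disc of radius at least $\rho^{1/2}>\rho$. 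So on $[0,\rho]$ they act as known ``smooth'' coefficients, and the singularity of $T$ comes only from the implicit dependence on $T$.

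For the \emph{lower bound} $\rho\geq 0.6677$, I would do the following. Compute the truncation $T_N$ of $T$ to some order $N$ (say several hundred) by the recurrence. Then bound the tail: for $x$ in the relevant range, $T(x^2)\leq T_N(x^2)+\epsilon$, with $\epsilon$ controlled through Lemma \ref{lem:planeTreeCoeffIneq} (the coefficients of $T$ are at most the Catalan coefficients of $B$, and $x^2\leq 0.45<1/4$ after squaring appropriately… i.e.\ geometric tail since $x^{2}<\rho^{2}$ well inside radius $1/2$ of $B$ in $z^2$). Use Lemma \ref{lemma:powerbounds} to bound $T(x^j)\leq T(x^2)^{j/2}$ for $j\geq3$, so the sums over $j\geq 2$ are dominated by explicit geometric-type series.

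Next, the negative terms must be handled so that the dominating equation has nonnegative coefficients. Expand $\frac{1}{2}(e^{A}e^{T}+e^{A^-}e^{-T})-\frac12 T^2$: the coefficient of $T^m$ is $\frac{1}{2\,m!}(e^{A}+(-1)^me^{A^-})$, minus $\frac12$ at $m=2$. These are nonnegative power series in $z$ (the $m=2$ one loses only the constant $1/2$, which is exactly the removed degree-2 root). Replace $A,A^-$ and $T(z^2)$ by their coefficientwise upper bounds $\bar A$ (polynomial plus a tail majorant). This gives an entire $G^{+}(z,w)$ with nonnegative coefficients dominating the true system. Its solution $y^+$ then dominates $T$ coefficientwise, so $\rho\geq r^+$, where $(r^+,s^+)$ solves $G^+(r,s)=s$, $G^+_w(r,s)=1$ by Lemma \ref{lemma:implicitsing}. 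Numerically solving this $2\times 2$ system should give $r^+\geq 0.6677$.

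For the \emph{upper bound} $\rho\leq0.6678$, I would symmetrically replace the $T(z^j)$ by the truncations $T_N(z^j)$ (lower bounds) and drop nothing positive. This yields $G^-\leq G$ coefficientwise, so $T$ dominates $y^-$ and $\rho\leq r^-\approx 0.6678$. The upper bound also confines the search region, which justifies the ranges used for $x$ above.

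The main obstacle is the lower bound. The tail estimate for $T(x^j)$, $j\geq2$, must be rigorous and sharp enough that the perturbation moves the characteristic root by less than $10^{-4}$. I would first try a truncation order $N$ large enough that $T_N(\rho^2)$ is accurate to about $10^{-6}$, using Catalan-number domination for the tail. The sign bookkeeping (the $-\frac z2 T(z^2)$ term and the alternating $A^-$) also needs care: one must bound $e^{A^-}$ from above by using $|A^-|\leq A$ only where it appears with positive sign.
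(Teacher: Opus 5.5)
Your architecture is the paper's. A truncated system whose solution is coefficientwise below $T$ gives the upper bound. A majorizing system built from truncations, the Catalan domination of Lemma~\ref{lem:planeTreeCoeffIneq} and Lemma~\ref{lemma:powerbounds} gives the lower bound. Both radii come from the characteristic system of Lemma~\ref{lemma:implicitsing}.

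The gap is in the comparison step. You cannot replace $A$, $A^-$ and the subtracted $T(z^2)$ by upper bounds independently. In the coefficient $\frac{1}{2\,m!}\bigl(e^{A}-e^{A^-}\bigr)$ of $w^m$ for odd $m$, and in $-\frac z2T(z^2)$, these quantities carry a minus sign. (Your expansion omits $-\frac z2T(z^2)$; it is cancelled only by the $\frac z2T(z^2)$ hidden in the $w^0$ coefficient.) Enlarging them decreases the right-hand side, so no domination follows.

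Your patch via $|A^-|\le A$ only covers the places where $e^{A^-}$ has a plus sign. Completing it with $e^{A^-}\ge e^{-A}$ in the odd-$m$ coefficients replaces $e^{A}-e^{A^-}\approx 0.38$ by $2\sinh A\approx 0.92$ near the singularity, which is hopeless at the $10^{-4}$ scale.

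The missing idea is joint nonnegativity. Treat $X_j=T(z^j)$, $j\ge2$, as independent variables. Then
\[
G(z,w)=z\Bigl(1+\sum_{n\ge 4,\ n\ \mathrm{even}} Z(S_n;w,X_2,X_3,\dots)\Bigr),
\]
with $Z(S_n;\cdot)$ the cycle index of the symmetric group. So $G$ has nonnegative coefficients in $w,X_2,X_3,\dots$ jointly. Hence each $X_j$ may be replaced by one majorant (or minorant), the same one everywhere it occurs: in both exponentials with their alternating signs and in the subtracted term. This is how the paper builds $\hat G$, $H$ and $\check G$ (its remark that negative signs cancel after unrolling the exponentials). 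Your upper-bound step tacitly needs it too.

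There are two further problems. First, no entire $G^+$ can dominate $G$ coefficientwise, since $G(z,0)\succeq\frac z8T(z^2)^2$ has finite radius $\sqrt\rho$. Your tail estimates $T(x^2)\le T_N(x^2)+\epsilon$ and $T(x^j)\le T(x^2)^{j/2}$ are pointwise on the positive axis, so ``$y^+$ dominates $T$ coefficientwise'' does not follow. Argue on the real axis instead:

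\begin{enumerate}
\item By joint monotonicity, $G(x,w)\le G^+(x,w)$ for $0\le x\le 0.6678$ and $w\ge0$.
\item Suppose $\rho<r^+$. Then $G^+(\rho,w)-w$, hence the convex function $w\mapsto G(\rho,w)-w$, takes negative values. So the least fixed point $w_0$ of $G(\rho,\cdot)$ satisfies $G_w(\rho,w_0)<1$.
\item Since $T(x)$ is the least fixed point of $G(x,\cdot)$ and $G$ increases with $x$, we get $T(\rho^-)=w_0$.
\item The implicit function theorem then continues $T$ analytically past $\rho$, contradicting Pringsheim's theorem.
\end{enumerate}

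Alternatively, use explicit non-entire coefficientwise majorants such as $T_N(z^j)+B(z^j)-M(z^j)$, together with the implicit-function schema for $G$ analytic in $|z|<2^{-1/2}$. The paper's passage from $H$ to $\check G$ has the same slip. Its claim $\check g_{nm}\ge h_{nm}$ fails for large $n$, since $\check G(z,0)$ is built from polynomials and constants; the pointwise argument is what justifies it.

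Second, apply Lemma~\ref{lemma:powerbounds} only to the far tail $j>N$, as the paper does. For $2\le j\le N$, keep $T_N(x^j)$ plus the uniform Catalan error. Near $x\approx0.6677$ one has $T(x^2)^{3/2}-T(x^3)\approx0.02$. Using the power bound for every $j\ge3$ moves the characteristic point by nearly $10^{-2}$, far beyond the $10^{-4}$ you can afford.
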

\begin{proof}

At first, consider the two variable function:

\begin{align}\label{eq:G}
\nonumber G(z,w) =   &\frac{z}{2} 
\ \exp\left( w + \sum\limits_{j \geq 2} \frac{\nicetrees(z^j)}{j}\right) 
+ \frac{z}{2}\exp\left(-w + \sum\limits_{j \geq 2} (-1)^j \frac{\nicetrees(z^j)}{j}\right) \\ &
- \frac{z}{2}\left({w^2}
+ {\nicetrees(z^2)} \right),
\end{align}
and we observe that by Lemma \ref{lemma:spec1}, $T(z) = G(z,T(z))$.

Let $N$ be a fixed (large) even integer.

We begin with computing the upper bound.   Recalling that we can compute coefficients of $T(z)$ up to any order, we compute a truncation of $T(z)$ to powers at most  $N$, which we call $L(z)$ ($L(z)$ is a polynomial of degree $N-1$, since even powers of $z$ in $T(z)$ vanish ---there exist no rooted trees with even outdegrees and even number of nodes). Then in $G$ above, we plug  $L(z^j)$ for $T(z^j)$  and restrict the sums in the expoentials to $j=2, \ldots, N$. We thus obtain some  $\UG(z,w)$, where:
\begin{align}\label{eq:UG}
\nonumber \UG(z,w) =   &\frac{z}{2} 
\ \exp\left( w + \sum\limits_{j =2}^{N} \frac{L(z^j)}{j}\right) 
+ \frac{z}{2}\exp\left(-w + \sum\limits_{j =2}^{N} (-1)^j \frac{L(z^j)}{j}\right) \\ &
- \frac{z}{2}\left({w^2}
+ {L(z^2)} \right).
\end{align}
We claim that $\UG(z,w)$ satisfies  all conditions for Lemma \ref{lemma:implicitsing}.  Indeed, first  observe that 
$\UG(z,w)$  is  entire,  because of the truncations. The first bullet in Lemma~\ref{lemma:implicitsing} follows by expanding the two exponentials in $\UG(z,w)$ as power series and observing, as before,  that terms with negative signs either cancel out or are absorbed  by ones with positive signs. The second  bullet  is obvious.  So, let $r,s$ be the positive real numbers that satisfy 
$$\UG(r,s)=r,\UG_w(r,s)=1.$$
 Observe now that  the solution $\hat{y}(z)$ to $\hat{y}(z) = \UG(z,\hat{y}(z))$
satisfies $[z^n] \hat{y}(z) \leq [z^n] \nicetrees(z)$, because  $\UG$ was obtained by truncations in $G$ and both  $[z^n] \hat{y}(z)$ and  $[z^n] \nicetrees(z), n\geq 1,$ are  obtained by polynomial recurrences with nonnegative coefficients defined via $\hat{y} = \UG(z,\hat{y}(z))$ and $T(z) = G(z, T(z))$, respectively. So $r$ is an upper bound on $\rho$. Finally, we choose, for example,  $N=100$, and  use a computer algebra system (in our case, Maple) to solve the system and find $r \leq \hat{\rho} := 0.6678$.

For the lower bound, first recall that
by Lemma \ref{lem:planeTreeCoeffIneq}     $B(z)$, the GF of full ordered binary trees, dominates coefficient-wise $T(z)$. Therefore the unique solution analytic at $0$ to the functional equation $y(z) = H(z, y(z))$, where  $H(z,w)$ is defined by: 
\begin{equation}\label{eq:upperBoundingSystem}
\begin{split}
    H(z,w) &=  \frac{z}{2} 
        \exp\left( w + \sum\limits_{j = 2}^{N} \frac{\nicetrees(z^j)}{j} 
        + \sum\limits_{j > N} \frac{B(z^j)}{j} \right)  \\ 
      &+\frac{z}{2}\exp\left(-w + \sum\limits_{j = 2}^{N} (-1)^j \frac{\nicetrees(z^j)}{j} 
      + \sum\limits_{j > N} \frac{(-1)^j B(z^j)}{j} \right) \\&
       -\frac{z}{2}\left(w^2 + T(z^2) \right)
\end{split}
\end{equation}
dominates coefficient-wise $\nicetrees(z)$ and therefore its radius of convergence is at most $\rho$. Indeed, just recall that unrolling the exponentials, the negative signs are cancelled.  Also observe that 
\begin{equation}\label{eq:upperBoundingSystem2}
\begin{split}
    H(z,w) &=  \frac{z}{2} 
        \exp\left( w + \sum\limits_{j = 2}^{N} \frac{L(z^j) +(T(z^j) - L(z^j))}{j}
        + \sum\limits_{j > N} \frac{B(z^j)}{j} \right)  \\ 
      &+\frac{z}{2}\exp\left(-w + \sum\limits_{j = 2}^{N} (-1)^j \frac{L(z^j) +(T(z^j) - L(z^j))}{j} 
      + \sum\limits_{j > N} \frac{(-1)^j B(z^j)}{j} \right) \\
      &-\frac{z}{2}\left(w^2 + L(z^2) +(T(z^2) - L(z^2))\right)
\end{split}
\end{equation}
 
Now, besides $L(z)$, the truncation of $T(z)$,  we also consider $M(z)$, the truncation of $B(z)$ up to powers at most $N$. Recall that $[z^{2n+1}]B(z)$ is the $n$-th Catalan number. 
We then  bound from above  
$\nicetrees(z^j) - L(z^j), j\geq 2,$  as well as $B(z^j),j \geq 2,$
 on the  interval $[0,\rho)$,  where all  $\nicetrees(z^j), j\geq 2,$ and $B(z^2)$ are analytic (because $\rho \leq  \hat{\rho} =  0.6678 < \sqrt(1/2)$), as follows.  

Observe that for $x \in [0,\rho)$ and $j\geq 2$, and because $\rho <1$ and  $B(z)$ bounds coefficient-wise $T(z)$, we have:

\begin{equation}\label{eq:error1}
	\nicetrees(x^j) - L(x^j) \leq B(x^j) - M(x^j)  \leq B(x^2) - M(x^2) \leq B(\hat{\rho}^2) -M(\hat{\rho}^2) :=e.
	\end{equation}

 By Lemma \ref{lemma:powerbounds},  we also have:

\begin{equation}\label{eq:error2}
     \frac{B(x^j)}{j} \leq  \frac{b^{j/2}}{j}
   \end{equation}
for $x \in [0,\hat{\rho})$, where $b := B(\hat{\rho}^2)$.

We plug the errors given in  Equations \eqref{eq:error1} and \eqref{eq:error2} into the rhs of Equation \eqref{eq:upperBoundingSystem2}, to obtain the function:
\begin{equation}\label{eq:Gcheck}
\begin{split}
    \LG(z,w) &=  \frac{z}{2} 
        \exp\left( w + \sum\limits_{j = 2}^{N} \frac{L(z^j) +e}{j}
        + \sum\limits_{j > N} \frac{b^{j/2}}{j} \right)  \\ 
      &+\frac{z}{2}\exp\left(-w + \sum\limits_{j = 2}^{N} (-1)^j \frac{L(z^j) +e}{j} 
      + \sum\limits_{j > N} \frac{(-1)^j b^{j/2}}{j} \right) \\
      &-\frac{z}{2}\left(w^2 + L(z^2) +e\right).\\  
    \end{split}
 \end{equation} 
 Analogously to $\hat{G}$ we examined for the upper bound, we can show that the  entire $\check{G}$  satisfies  the conditions of Lemma \ref{lemma:implicitsing}. So let $\check{y}(z)$ be the analytic around 0 function implicitly defined by  $y(z) = \hat{G}(z, y(z)$ and also let $r,s = \lim_{x\rightarrow r^{-}}\check{y}(x)$ be the unique solution of the characteristic system $$\check{G}(r,s)=s, \ \check{G}_w(r,s) =1$$.

 Now, if $h_{nm}$ and $\check{g}_{nm}$  are the Taylor coefficients of the two-variable functions 
  $H$ and   $\check{G}$, respectively, 
 we have that $\check{g}_{nm} \geq   h_{nm} $. Indeed, the appearance of  the error $e$ in $-\frac{z}{2}\left(w^2 + L(z^2) +e\right)$ vanishes when we differentiate. Therefore we may assume that the error $e$ appears in the exponentials only. So does $b$,  so $e$ and $b$  contribute multiplicatively to the coeffcients of $\check{G}$, so  $$\frac{\partial^n \partial^m \check{G}(0,0)}{\partial z^n \partial w^m} \geq \frac{\partial^n \partial^m H(0,0)}{\partial z^n \partial w^m}, \forall n, m.   $$
   
Therefore the coefficients $[z^j]\check{g}(z)$, which are given by a recurrence that entails $\check{g}_{nm}$, 
 are at least as large as the corresponding coefficients of the solution of $w = H(z,w)$, which are given by a recurrence that entails $h_{nm}$, 
 and therefore at least as large as the coefficients of $T(z)$, so $r \leq \rho$.

   We finally take, for example, $N=100$ and use Maple to find $r \geq \check{\rho} := 0.6677.$ \end{proof}

The Cauchy-Hadamard theorem that for any series $\sum_{n=0}^{\infty}a_nz^n$ its radius of convergence $r$  satisfies $ r^{-1} = \limsup_n |a_n|^{1/n}$ then proves  Lemma \ref{lem:boundtrees}, and threfore  our \ref{maintheorem}.

The Maple computations, and their specifications, are provided \href{https://drive.google.com/file/d/1ZRTi95dtOXLDpn2KKEApK2Q1sPOTVK5k/view?usp=sharing}{here}.

\end{document}